\newcommand{\Pj}{\mathbb{P}}
\newcommand{\Z}{\mathbb{Z}}
\newcommand{\Gr}{\mathrm{Gr}}
\newcommand{\eat}[1]{}
\newcommand{\C}{\mathbb{C}}
\newtheorem{theorem}{Theorem}[section]
\newtheorem{corollary}[theorem]{Corollary}
\newtheorem{lemma}[theorem]{Lemma}
\newtheorem{remark}[theorem]{Remark}
\newtheorem{question}[theorem]{Question}
\DeclareRobustCommand*\cal{\@fontswitch\relax\mathcal}
\date{}
\begin{document}

\title[Morphisms from projective spaces to flags of minimal parabolics]{Morphisms from projective spaces to flags of minimal parabolic subgroups}
\author{Sarjick Bakshi}
\thanks{Indian Institute of Technology, Kanpur, India, {\tt sarjick91@gmail.com}, Corresponding author}
\author{A J Parameswaran}
\thanks{Kerala School of Mathematics, Kerala, India, {\tt param@ksom.res.in}}

\subjclass[2020]{14M15, 14M17}

\keywords{Projective spaces, flag varieties, minimal parabolic subgroups, Schubert calculus, cohomology rings}

\maketitle

\begin{abstract}
We classify nonconstant morphisms $\Pj^m \to G/P$ for $m \le 4$ when $G = SL(n,\C)$ (type~$A$) for a minimal parabolic subgroup $P$. 
Using the Borel presentation of cohomology and explicit Schubert intersection identities, 
we show that there is no nonconstant morphism $\Pj^2 \to G/B$; for minimal parabolic subgroup $P_{\alpha_i}$, 
there are no nonconstant morphisms $\Pj^3 \to G/P_{\alpha_i}$ when $i \in \{1, n-1\}$, 
while such morphisms exist for $1 < i < n-1$; and, after correcting an earlier error (pointed out by Yanjie Li), 
we give an elementary proof that there is no nonconstant morphism $\Pj^4 \to G/P_{\alpha_i}$ for any minimal parabolic subgroup. 
The proofs are elementary and cohomological.
\end{abstract}

\section{Introduction}

We study regular morphisms $\phi : \Pj^m \to G/P$, where $G = SL(n,\C)$ and $P \supset B$ is a parabolic subgroup.
Our approach is entirely cohomological.
We make systematic use of the Borel presentation~\cite{Borelcohoring}
\[
  H^\ast(G/B,\Z) \;\cong\; \Z[x_1,\dots,x_n]/\mathcal I,
\]
where $\mathcal I$ is the ideal generated by symmetric polynomials of positive degree, together with the description
\[
  H^\ast(G/P) \;\cong\; H^\ast(G/B)^{W_P}
\]
due to Reiner--Woo--Yong~\cite{Reineretal}.
Pulling Schubert intersection relations back along $\phi$ to
\[
  H^\ast(\Pj^m) \;\cong\; \Z[t]/(t^{m+1})
\]
produces explicit arithmetic constraints which, in low dimensions, force the vanishing of all positive-degree cohomology classes.
This mechanism underlies all rigidity and nonexistence results proved in this paper.

Let $k \le n$ be positive integers and let $Gr(k,n)$ denote the Grassmannian of $k$-dimensional subspaces of an $n$-dimensional complex vector space.
The Grassmannian is itself a partial flag variety $G/P$ corresponding to a maximal parabolic subgroup.
Morphisms from projective spaces to Grassmannians were studied extensively by Tango in a series of foundational papers~\cite{Tango1,Tango2,Tango3}.
In particular, Tango showed in~\cite{Tango1} that there is no morphism $\Pj^m \to Gr(k,n)$ for $m \ge n$, and in~\cite{Tangobundle} constructed indecomposable globally generated vector bundles of rank $n-1$ on $\Pj^n$.
Later, while addressing questions of Lazarsfeld~\cite{Lazarsfeld}, Paranjape--Srinivas~\cite{srinivas} proved that under the assumption $1 \leq k \leq n-k$, a finite surjective morphism $Gr(k,n) \to Gr(\ell,m)$ exists if and only if $(k,n) = (\ell,m)$.
These works naturally motivated a broader investigation of morphisms between projective homogeneous varieties.

Cohomological methods have played a central role in the study of morphisms
between projective homogeneous varieties. Tango’s original arguments relied
on comparisons of Chow rings. Subsequently, Muñoz--Occhetta--Solá Conde
\cite{Munoz-etal} introduced the notion of \emph{effective good divisibility},
refining an earlier concept of good divisibility due to Pan~\cite{Pan}, and
showed that this invariant already suffices to obtain Tango-type rigidity
results. Using effective good divisibility, Naldi--Occhetta
\cite{Naldi--Occhetta} proved that every morphism
$\Gr(k,n)\to\Gr(\ell,m)$ with $n>m$ is constant, and computed the effective
good divisibility of Grassmannians. More recently, Occhetta--Tondelli
\cite{OcchettaTondelli24} showed that a nonconstant morphism
$\Gr(l,n)\to\Gr(k,n)$ with $l\neq 0,n-1$ forces $l=k$ or $l=n-k-1$, in which
case the morphism is necessarily an isomorphism. Independently and almost
contemporaneously, Hu--Li--Liu~\cite{HuLiLiu23} obtained general nonexistence
results for morphisms between rational homogeneous varieties of arbitrary
Lie type, using a different, Lie-theoretic and combinatorial approach to
effective good divisibility.

In this paper we focus on morphisms from low-dimensional projective spaces to partial flag varieties of type~$A$.
Our first basic result is the following.

\begin{theorem}\label{mapsfromP2}
There is no nonconstant morphism from $\Pj^2$ to $G/B$.
\end{theorem}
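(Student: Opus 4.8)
The plan is to study the pullback, under a hypothetical morphism $f\colon\mathbb{P}^2\to G/B$, of the tautological flag of subbundles on $G/B$, and to derive a numerical contradiction from the triviality of the ambient rank-$n$ bundle.

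Recall that $G/B$, for $G=SL(n,\mathbb{C})$, is the variety of complete flags in $\mathbb{C}^n$ and carries the universal flag of subbundles
\[
0=E_0\subset E_1\subset\cdots\subset E_{n-1}\subset E_n=\mathcal{O}_{G/B}^{\oplus n},\qquad \operatorname{rk}E_i=i,
\]
where, writing $\pi_i\colon G/B\to Gr(i,n)$ for the $i$-th projection, one has $E_i=\pi_i^{\ast}S_i$ with $S_i$ the tautological subbundle, so that $\det E_i\cong\pi_i^{\ast}\mathcal{O}_{Gr(i,n)}(-1)$. Set $F_i=f^{\ast}E_i$; each inclusion $F_{i-1}\hookrightarrow F_i$ is a subbundle inclusion with line-bundle quotient $L_i=f^{\ast}(E_i/E_{i-1})$, and $F_n=\mathcal{O}_{\mathbb{P}^2}^{\oplus n}$. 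Write $c_1(L_i)=d_i\,H$, $d_i\in\mathbb{Z}$, with $H$ the positive generator of $H^2(\mathbb{P}^2,\mathbb{Z})$.

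The decisive step is a short computation. By multiplicativity of total Chern classes along the filtration $F_\bullet$,
\[
1=c\bigl(\mathcal{O}_{\mathbb{P}^2}^{\oplus n}\bigr)=c(F_n)=\prod_{i=1}^n c(L_i)=\prod_{i=1}^n(1+d_iH)\qquad\text{in }H^{\bullet}(\mathbb{P}^2,\mathbb{Z})=\mathbb{Z}[H]/(H^3).
\]
The coefficient of $H$ yields $\sum_i d_i=0$ and the coefficient of $H^2$ yields $\sum_{i<j}d_id_j=0$, so $\sum_i d_i^2=\bigl(\sum_i d_i\bigr)^2-2\sum_{i<j}d_id_j=0$; since the $d_i$ are integers, $d_i=0$ for every $i$. (This amounts to saying that the relations $e_1=e_2=0$ in Borel's presentation $H^{\bullet}(G/B,\mathbb{Z})=\mathbb{Z}[x_1,\dots,x_n]/(e_1,\dots,e_n)$ combine, after pullback, to the relation $\sum_i d_i^2=0$, whose left side is positive-definite.) Hence each $L_i$, and so each $F_i=f^{\ast}E_i$, has trivial first Chern class; as $c_1\colon\operatorname{Pic}(\mathbb{P}^2)\to H^2(\mathbb{P}^2,\mathbb{Z})$ is an isomorphism, each $f^{\ast}\det E_i$ is the trivial line bundle.

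It remains to conclude. For each $i$, $(\pi_i\circ f)^{\ast}\mathcal{O}_{Gr(i,n)}(1)\cong f^{\ast}(\det E_i)^{\vee}$ is trivial, so $\pi_i\circ f$ pulls the very ample Plücker bundle back to $\mathcal{O}_{\mathbb{P}^2}$ and is therefore constant — composing with the Plücker embedding displays a morphism $\mathbb{P}^2\to\mathbb{P}^N$ with trivial $\mathcal{O}(1)$-pullback, which can only be constant. Thus $\pi_i\circ f$ is constant for every $i=1,\dots,n-1$, and since $(\pi_1,\dots,\pi_{n-1})\colon G/B\to\prod_i Gr(i,n)$ is injective on points (a flag is recovered from the family of its subspaces), $f$ itself is constant. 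I do not expect a real obstacle here; the checks are routine ($f^{\ast}$ is a ring map, Chern classes are multiplicative along a filtration, $\operatorname{Pic}$ agrees with $H^2$ on $\mathbb{P}^2$). The one genuinely essential ingredient — and the reason the statement is special to $\mathbb{P}^2$ and fails for $\mathbb{P}^1$, where non-constant maps into $G/B$ certainly exist — is that $H^4(\mathbb{P}^2)\neq0$, so that the degree-two relation $\sum_{i<j}d_id_j=0$ is not vacuous; the crux is then the elementary observation that this degree-$\le2$ information is already positive-definite. (The effective-good-divisibility criterion recalled in the introduction does not directly apply, since $G/B$ already has effective good divisibility at least $2$, so a hands-on argument of this kind seems unavoidable.)
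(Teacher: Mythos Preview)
Your argument is correct and is essentially the paper's proof in geometric dress: the paper invokes Borel's presentation and the power-sum relation $\sum x_i^2=0$ in $H^4(G/B)$ to force $\sum a_i^2=0$, while you obtain the equivalent relations $e_1=e_2=0$ directly from the Chern class of the trivial bundle via the tautological filtration and then deduce $\sum d_i^2=0$; the decisive positive-definiteness step is identical. Your packaging is slightly more self-contained (it bypasses quoting Borel's theorem) and your concluding step, factoring through the Pl\"ucker embeddings of the Grassmannian projections, is spelled out more carefully than the paper's terse ``$\phi^{*i}=0$ hence constant,'' but the two proofs are the same in substance.
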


\medskip

\noindent
\textbf{Remark.}
After the first version of this work was completed, we became aware that Shrawan Kumar independently proved the nonexistence of morphisms $\Pj^2 \to G/B$ in~\cite{kumarsconj}. In fact he effectively got a general version of our result. We however keep our proof for completion. Shrawan Kumar has formulated a general conjectural framework governing the existence of morphisms between complex flag varieties \cite{kumarsconj}.

\medskip

Let $P \supset B$ be a parabolic subgroup and denote by $\mathrm{rank}(G/P)$ the rank of the Picard group $\mathrm{Pic}(G/P)$.
For example, $\mathrm{rank}(Gr(k,n)) = 1$, $\mathrm{rank}(G/B) = n-1$, and $\mathrm{rank}(G/P) = n-2$ for a minimal parabolic subgroup $P$.
Motivated by Theorem~\ref{mapsfromP2} and by Tango’s results, it is natural to ask:

\begin{question}\label{question}
Classify morphisms $\Pj^m \to G/P$ with $m = n + 1 - \mathrm{rank}(G/P)$?
\end{question}

We investigate this question for minimal parabolic subgroups.
Let $\alpha_1,\dots,\alpha_{n-1}$ denote the simple roots of $SL(n,\C)$, and for each $\alpha_j$ let
\[
  P_{\alpha_j} := B \cup Bs_{\alpha_j}B
\]
be the corresponding minimal parabolic subgroup.
Our main results are summarized as follows (detailed proofs appear in Sections~\ref{sec:P2}--\ref{sec:P4}).

\begin{itemize}[leftmargin=*,itemsep=3pt]
\item[(A)] (\S\ref{sec:P2}) There is no nonconstant morphism $\Pj^2 \to G/B$.
\item[(B)] (\S\ref{sec:P3}) For minimal parabolic subgroups $P_{\alpha_i}$, there is no nonconstant morphism
$\Pj^3 \to G/P_{\alpha_i}$ when $i \in \{1,n-1\}$, while for each $1 < i < n-1$ there exists a nonconstant morphism
$\Pj^3 \to G/P_{\alpha_i}$.
\item[(C)] (\S\ref{sec:P4}) For any minimal parabolic subgroup $P_{\alpha_i}$, there is no nonconstant morphism
$\Pj^4 \to G/P_{\alpha_i}$.
\end{itemize}

Taken together, these results provide a complete classification of morphisms $\Pj^m \to G/P$ for $m \le 4$ in type~$A$.
The proofs are entirely elementary and explicit: we avoid characteristic-class and stability arguments and instead rely only on Schubert intersection identities and relations among symmetric polynomials.

\noindent
\textbf{Remark.}
After the first version of this paper appeared on the arXiv, Yanjie Li kindly pointed out an error in the proof of Theorem~5.1, namely that Formula~(8) in that argument was incorrect.
As a result, Theorem~6.1 of the original preprint was removed.
In the present version we provide a corrected and completely elementary proof of nonexistence of morphism from $\Pj^4$ to $G/P_{\alpha_i}$ for any minimal parabolic $P_{\alpha_i}$ thereby restoring this result.
After the initial posting we were also informed of related work of Fang and Ren~\cite{FangRen2025}, which proves rigidity results of a different nature but does not subsume the explicit low-dimensional classification carried out here.

\section{Preliminaries}
Let $G = SL(n,\C)$ denote the set of all $n\times n$ matrices with determinant $1$. Let $B$ denote the Borel subgroup of upper triangular matrices, and $T$ denote the maximal torus consisting of diagonal matrices inside $G$. Denote $R$ the root system of $(G,T)$. Let $R^+$ denote the subset of $R$ consisting of positive roots. Let $\epsilon_i$ denote the character of $T$ which sends $diag(t_1,t_2,\ldots, t_n)$ to $t_i$. Let $\alpha_i = \epsilon_i - \epsilon_{i+1}$. Then a subset $S= \{\alpha_1, \ldots, \alpha_{n-1} \}$ of $R^+$ gives a set of simple roots. The Weyl group $W$ is the group generated by the simple reflections $s_{\alpha}$, $\alpha \in S$. In our case, $W$ is the symmetric group in $n$ letters $S_n$. The simple reflections $s_{\alpha_i}$ can be thought of as the transposition of $i$-th and $i+1$-th letter. We would use the one-line notation $(w(1),w(2),\ldots,w(n))$ to denote the permutation $w$ in $S_n$. 

Let $J$ be a subset of $S$. Let $W_J$ denote the subgroup of $W$ generated by $s_{\alpha}$, $\alpha \in J$. For every $J$ we associate a parabolic subgroup $P_J$ as follows 
\[ P_J = \bigsqcup_{w \in W_J} BwB.  \]

The set $W^J = W/W_J$ is called the set of {\em minimal length coset representatives}. Alternatively, we have (see, \cite[Section 2.5]{Billeylakshmibai}) 
\[ W^J = \{ w\in W \mid w(\alpha) > 0 \text{ for all } \alpha \in J \}.
\] 

The {\em full flag variety} is by definition the variety $G/B$. The projective homogeneous space $G/P_J$ is called a {\em partial flag variety} and its Bruhat decomposition is given by 
\[ G/P_J = \bigsqcup_{w \in W^J} BwP_J.
\]
Whenever $W_J$ is generated by one element $s_\alpha$ for $\alpha \in S$, we call the associated parabolic subgroup a {\em minimal parabolic subgroup} and we denote it as $P_{\alpha}$. Note that, $P_{\alpha}/B$ is isomorphic to $\mathbb{P}^1$. Whenever $J$ is obtained from $S$ by removing one simple root $\alpha_k$, we call the associated parabolic subgroup a {\em maximal parabolic subgroup} and we denote it by $P_{\hat{\alpha_k}}$. We recall that the {\em Grassmannian variety} $Gr(k,n)$ of $k$ dimensional subspaces of a $n$-dimensional complex vector space is isomorphic to $G/{P_{\hat{\alpha_k}}}$. Let 
\[ I(k,n) = \{(i_1, i_2, \ldots, i_k) |~ 1 \leq i_1 < i_2 < \cdots < i_{k} \leq n\}.
\]
Let $w = (i_1,i_2, \ldots, i_k) \in I(k,n)$. Let $e_1, e_2, \ldots, e_n$ be the standard basis of $\mathbb{C}^n$. Let $M_i$ denote the vector space spanned by $e_1,e_2, \ldots ,e_i$. The {\em Schubert cell} $C(w)$ in the Grassmannian is defined as 
\[ C(w) = \{U \in Gr(k,n)|~\rm{dim}(U \cap M_{i_j}) = j, 1\leq j \leq k \}.
\]
The dimension of such a Schubert cell $C(w)$ is given by $\sum_{j}(i_j-j)$. The {\em Schubert variety} $X(w)$ which is the closure of $C(w)$ in Grassmannian can be seen to be
\[ X(w) = \{U \in Gr(k,n)|~\rm{dim}(U \cap M_{i_j}) \geq j, 1\leq j \leq k \}.
\]

Let $R$ denote the polynomial ring $\mathbb{Z}[x_1,x_2,\ldots,x_n]$ in $n$ variables with degree of $x_i$ being $2$. We recall that $S_n$ acts on the variables as 
\[ \sigma(x_i) = x_{\sigma (i)}.
\]
The action extends to an action of $S_n$ on $R$.  A polynomial $f(x_1,x_2,\ldots,x_n)$ in $R$ is {\em symmetric} if and only if 
\[ f(x_1,x_2,\ldots,x_n) = f(\sigma(x_1),\sigma(x_2),\ldots,\sigma(x_n))
\]
for all $\sigma \in S_n$. 
The {\em power sum symmetric polynomial} $p_k(x_1,x_2\ldots,x_k)$ is defined as 
\[  p_k(x_1,x_2\ldots,x_k) = \sum^n_{i=1} x_i^k .
\]

We recall that the subring of invariants $R^{S_n}$ of $R$ is a graded subring and is generated by symmetric polynomials. Let ${\cal{I}}$ denote the ideal generated by symmetric polynomials in positive degree. The power sum symmetric polynomials $p_k(x_1,x_2,\ldots,x_n)$ for $1 \leq k \leq n$ form a set of generators for ${\cal{I}}$. 

Let $X$ be a projective variety. Let $H^{\bullet}(X) = \bigoplus\limits_{d=1}^{n} H^d(X)$ denote the cohomology ring of the variety with integer coefficients. Let $A^{\bullet}(X) = \bigoplus\limits_{d=1}^{n} A^d(X)$ denote its Chow ring. We recall from \cite[Chapter 19]{fultonintersection} that there exists a cycle map 
\[ cy: A^{\bullet}(X) \longrightarrow H^{\bullet}(X). 
\]
Whenever $X$ is a partial flag variety the map $cy$ is an isomorphism (see, \cite[Example 19.1.11]{fultonintersection}) and the cohomologies in odd degrees vanish. When $X$ is the full flag variety $G/B$ we recall

\begin{theorem}\cite[Ehresmann]{Ehresmann} $H^{2d}(G/B)$ has a basis consisting of classes of Schubert varieties $[X(w_0w)]$ where $l(w) = d$ where $w_0$ is the longest word in $W$. 
\end{theorem}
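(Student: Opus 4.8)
The plan is to deduce the statement from the Bruhat (Schubert) cell decomposition of $G/B$ together with the identification of the Chow ring with cohomology already recorded in the excerpt. First I would recall that the Bruhat decomposition gives a partition $G/B = \bigsqcup_{w \in W} C(w)$ into Schubert cells, with $C(w) = BwB/B$ isomorphic to the affine space $\mathbb{A}^{l(w)}$, and that the closure of $C(w)$ is the Schubert variety $X(w) = \bigsqcup_{v \le w} C(v)$, where $\le$ denotes the Bruhat order. The crucial structural point is that this exhibits $G/B$ as a variety with an \emph{affine cellular decomposition}: a filtration by closed subvarieties whose successive complements are affine spaces, with each $X(w)$ a union of cells.

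Second, I would invoke Fulton's theorem on cellular decompositions \cite[Example 1.9.1]{fultonintersection}: for a variety admitting such a decomposition into affine cells, the Chow group $A_k(X)$ is free abelian with basis the classes of the closures of the $k$-dimensional cells. Applied here, since $\dim X(w) = \dim C(w) = l(w)$, the group $A_k(G/B)$ is free abelian on $\{[X(w)] : l(w) = k\}$. I would then pass to cohomology: writing $N = |R^+| = \dim G/B = l(w_0)$, smoothness of $G/B$ gives $A^c(G/B) = A_{N-c}(G/B)$, so $A^c(G/B)$ is free on $\{[X(w)] : l(w) = N - c\}$. Because the cycle map $cy : A^{\bullet}(G/B) \to H^{\bullet}(G/B)$ is an isomorphism for the flag variety \cite[Example 19.1.11]{fultonintersection} and doubles degrees, it carries $A^c$ isomorphically onto $H^{2c}$ and $[X(w)]$ to the cohomology class of $X(w)$. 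Hence $H^{2c}(G/B)$ is free abelian on $\{[X(w)] : l(w) = N - c\}$.

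Finally I would reindex. The map $w \mapsto w_0 w$ is a bijection of $W$ satisfying $l(w_0 w) = l(w_0) - l(w) = N - l(w)$. Thus the length-$(N-c)$ elements are exactly those of the form $w_0 w$ with $l(w) = c$, so the basis of $H^{2c}(G/B)$ rewrites as $\{[X(w_0 w)] : l(w) = c\}$; setting $d = c$ gives the theorem. Note this bookkeeping is also what places the class correctly: $X(w_0 w)$ has complex codimension $N - l(w_0 w) = l(w)$, so $[X(w_0 w)] \in H^{2 l(w)}$.

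I expect the two points requiring care to be (i) verifying that the Bruhat stratification genuinely meets the hypotheses of the cellular-decomposition theorem — a filtration by closed subsets with affine-space successive quotients, which follows from each $X(w)$ being a union of Schubert cells and each cell being an affine space — and (ii) the length identity $l(w_0 w) = N - l(w)$ that underlies the reindexing. As an alternative to the Chow-theoretic route, one may argue purely topologically: since every Schubert cell has even real dimension $2 l(w)$, the associated CW structure on $G/B$ has cells only in even degrees, so all cellular boundary maps vanish and $H_*(G/B;\mathbb{Z})$ is free abelian on the fundamental classes $[X(w)] \in H_{2 l(w)}$; Poincar\'e duality on the closed oriented manifold $G/B$ then identifies $H_{2 l(w)}$ with $H^{2(N - l(w))}$, and the same reindexing by $w \mapsto w_0 w$ completes the proof.
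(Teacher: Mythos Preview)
The paper does not actually prove this theorem; it is simply stated and attributed to Ehresmann as a classical result, with no accompanying \texttt{proof} environment. Your proposal supplies a correct and standard argument (Bruhat cellular decomposition, Fulton's basis theorem for the Chow groups of a cellularly decomposed variety, the cycle map isomorphism, and the reindexing $w\mapsto w_0w$ via $l(w_0w)=l(w_0)-l(w)$), as does your alternative CW/Poincar\'e-duality route. There is nothing to compare against in the paper itself.
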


In \cite{Borelcohoring}, Borel, gave a presentation of the cohomology ring using the polynomial ring $R$ and the ideal $\mathcal{I}$
\begin{theorem}\label{borelisom} \cite[Borel]{Borelcohoring} 
$H^{\bullet}(G/B) \cong R/{\cal{I}}$.
\end{theorem}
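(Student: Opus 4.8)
The plan is to construct a natural surjective graded ring homomorphism $\varphi\colon R/{\cal I}\to H^{\bullet}(G/B)$ and then promote it to an isomorphism by matching $\mathbb{Z}$-bases, equivalently by comparing Hilbert series. Recall that $G/B$ is the variety of complete flags $0=V_0\subset V_1\subset\cdots\subset V_n=\mathcal{O}^n$ in $\mathbb{C}^n$, carrying tautological subbundles $V_i$ of rank $i$; set $L_i=V_i/V_{i-1}$ and $\xi_i=c_1(L_i)\in H^2(G/B)$. From the short exact sequences $0\to V_{i-1}\to V_i\to L_i\to 0$ one gets $\prod_{i=1}^n(1+\xi_i)=c(V_n)=c(\mathcal{O}^n)=1$, so $e_k(\xi_1,\dots,\xi_n)=0$ for all $k\ge 1$. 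Since ${\cal I}=(e_1,\dots,e_n)$ (the $e_k$ with $k\ge 1$ generate the positive‑degree symmetric polynomials), the assignment $x_i\mapsto\xi_i$ extends to a well‑defined graded ring map $R\to H^{\bullet}(G/B)$ killing ${\cal I}$, giving $\varphi$.

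For surjectivity I would run the tower of projective bundles $G/B=F_{n-1}\to F_{n-2}\to\cdots\to F_1=\mathbb{P}^{n-1}\to \operatorname{pt}$, where $F_k$ parametrizes partial flags $V_1\subset\cdots\subset V_k$ and $F_k\to F_{k-1}$ is the projectivization of the rank $n-k+1$ quotient bundle $\mathbb{C}^n/V_{k-1}$, hence a $\mathbb{P}^{n-k}$‑bundle whose tautological line subbundle is $V_k/V_{k-1}$. By the projective bundle theorem, $H^{\bullet}(F_k)$ is free over $H^{\bullet}(F_{k-1})$ on $1,\zeta_k,\dots,\zeta_k^{\,n-k}$, where $\zeta_k=-c_1(V_k/V_{k-1})=-\xi_k$ pulled back to $G/B$. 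Iterating, $H^{\bullet}(G/B)$ is a free $\mathbb{Z}$‑module with basis the monomials $\prod_{k=1}^{n-1}\zeta_k^{a_k}$, $0\le a_k\le n-k$; in particular it is generated as a $\mathbb{Z}$‑algebra by $\xi_1,\dots,\xi_{n-1}$, so $\varphi$ is surjective. Moreover this identifies the image under $\varphi$ of the Artin monomials $\{\prod_{k=1}^{n-1}x_k^{a_k}:0\le a_k\le n-k\}$ with (up to signs) a $\mathbb{Z}$‑basis of $H^{\bullet}(G/B)$.

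For injectivity I would compare Hilbert series. On one side, $R=\mathbb{Z}[x_1,\dots,x_n]$ is free over $R^{S_n}=\mathbb{Z}[e_1,\dots,e_n]$ of rank $n!$ with the Artin monomials as a homogeneous basis (Chevalley; classical for the symmetric group), so $R/{\cal I}$ is a free $\mathbb{Z}$‑module with Hilbert series $\prod_{i=1}^{n}(1+q^2+\cdots+q^{2i-2})$ and the Artin monomials descend to a $\mathbb{Z}$‑basis. On the other side, Ehresmann's theorem (stated above) says $H^{\bullet}(G/B)$ is a free $\mathbb{Z}$‑module with basis the Schubert classes, concentrated in even degrees, with Hilbert series $\sum_{w\in S_n}q^{2\ell(w)}=\prod_{i=1}^n(1+q^2+\cdots+q^{2i-2})$ by the standard length‑generating‑function identity for $S_n$. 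Thus $\varphi$ is a graded surjection of finite free $\mathbb{Z}$‑modules whose source and target have equal Hilbert series, so in each degree it is a surjection $\mathbb{Z}^r\twoheadrightarrow\mathbb{Z}^r$, hence an isomorphism; equivalently, $\varphi$ carries the Artin basis of $R/{\cal I}$ bijectively onto a $\mathbb{Z}$‑basis of $H^{\bullet}(G/B)$.

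The purely formal part — building $\varphi$ and running the projective‑bundle tower — is routine once one fixes sign and indexing conventions for the tautological bundles; the real inputs are the classical facts that $R$ is free over its ring of symmetric invariants (so $R/{\cal I}$ is $\mathbb{Z}$‑free of rank $n!$) and that the Poincaré polynomial of $G/B$ is $\prod_{i=1}^n(1+q^2+\cdots+q^{2i-2})$. I expect the main obstacle to be the bookkeeping: keeping straight the identification $\zeta_k=-\xi_k$ as one climbs the tower and checking that the resulting monomial basis of $H^{\bullet}(G/B)$ is indeed the image of the Artin basis of the coinvariant algebra, so that it is the matching of bases — not merely the equality of total ranks — that forces $\varphi$ to be injective.
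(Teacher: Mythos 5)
Your proof is correct. Note, however, that the paper does not prove this statement at all: it is quoted as a classical theorem with a citation to Borel, so there is no internal argument to compare against. Your route --- define $\varphi$ via first Chern classes of the graded pieces of the tautological flag, kill ${\cal I}=(e_1,\dots,e_n)$ by the Whitney formula applied to the trivial bundle, get surjectivity from the tower of projective bundles, and get injectivity by matching the Artin basis of the coinvariant algebra against the rank count coming from Ehresmann's theorem and the length generating function of $S_n$ --- is the standard type-$A$ argument and is complete modulo the classical inputs you name (freeness of $R$ over $R^{S_n}$ with the Artin monomials as basis, the projective bundle theorem, and the identity $\sum_{w\in S_n}q^{\ell(w)}=\prod_{i=1}^n(1+q+\cdots+q^{i-1})$). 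It differs from Borel's original proof, which runs the spectral sequence of the fibration $T\to G\to G/T$ for a general compact group with real coefficients and then descends to $\mathbb{Z}$ in the torsion-free cases; your Chern-class argument is specific to $SL(n)$ but yields the integral statement directly and, as a bonus, identifies the monomial basis $\prod\xi_k^{a_k}$, $0\le a_k\le n-k$, explicitly. One small point worth flagging: the paper's preliminaries assert that the power sums $p_1,\dots,p_n$ generate ${\cal I}$ over $\mathbb{Z}$, which is false as an ideal-theoretic statement ($e_2\notin(p_1,p_2)$ in $\mathbb{Z}[x_1,\dots,x_n]$, since $e_2=\tfrac12(p_1^2-p_2)$ requires a denominator); your choice to present ${\cal I}$ as $(e_1,\dots,e_n)$ is the correct one and is what the Whitney-formula step actually needs.
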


The results were extended for $G/P$, where $P$ is a parabolic subgroup of $G$ containing $B$ in \cite{Reineretal}. Let $J \subset S$ such that $P = P_J$. We have $W_J$ the subgroup of Weyl group generated by $J$ as above. Since $W_J$ is subgroup of $W$ it also acts on $H^{\bullet}(G/B)$. Reiner--Woo--Yong show that,

\begin{theorem} \cite{Reineretal}[Reiner--Woo--Yong]
$H^{\bullet}(G/P) \cong H^{\bullet}(G/B)^{W_J}$.
\end{theorem}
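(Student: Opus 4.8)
The plan is to work through the natural projection $\pi\colon G/B \to G/P$, a Zariski-locally trivial fibration with fiber $F := P/B$. Under Borel's presentation (Theorem \ref{borelisom}) we have $H^\bullet(G/B) \cong R/\mathcal{I}$, and the $W_J$-action on the right-hand side is the one induced by the permutation action of $W_J \subset S_n$ on the variables $x_1,\dots,x_n$; since $\mathcal{I}$ is generated by $S_n$-invariants it is $W_J$-stable, so the action descends to $R/\mathcal{I}$. The goal is to show that $\pi^\ast$ is injective, so that $H^\bullet(G/P)\cong \pi^\ast H^\bullet(G/P)$, and that this subring is precisely the invariant subring $(R/\mathcal{I})^{W_J}$.

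First I would analyze the fiber. The parabolic $P$ has a Levi factor $L$, and $F = P/B \cong L/(B\cap L)$ is the full flag variety of $L$; since $J$ breaks into blocks of consecutive simple roots, $F$ is a product of smaller $SL$-flag varieties, so $H^\bullet(F)$ is free over $\mathbb{Z}$, concentrated in even degrees, of rank $|W_J|$. The restriction $H^\bullet(G/B) \to H^\bullet(F)$ is surjective, as Schubert classes restrict to the Schubert basis of the fiber. Leray--Hirsch then yields an isomorphism of $H^\bullet(G/P)$-modules
\[
H^\bullet(G/B) \;\cong\; H^\bullet(G/P)\otimes_{\mathbb{Z}} H^\bullet(F),
\]
so $\pi^\ast$ is injective and $H^\bullet(G/B)$ is free over $\pi^\ast H^\bullet(G/P)$ of rank $|W_J|$.

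To identify the image I would use the Borel-type presentation $H^\bullet(G/P)\cong R^{W_J}/\mathcal{I}_J$, with $\mathcal{I}_J := (R^W_+)\,R^{W_J}$, which falls out of the same fibration; under it $\pi^\ast$ becomes the map induced by the inclusion $R^{W_J}\hookrightarrow R$, whose image visibly lies in $(R/\mathcal{I})^{W_J}$ since $R^{W_J}$ is fixed by $W_J$. For the reverse inclusion I would observe that the $W_J$-action on $H^\bullet(F)$ exhibits it as the coinvariant algebra of $W_J$, which over $\mathbb{Q}$ is the regular representation of $W_J$; hence only its trivial isotypic part survives and, by the Leray--Hirsch decomposition, $(H^\bullet(G/B)\otimes\mathbb{Q})^{W_J}=\pi^\ast H^\bullet(G/P)\otimes\mathbb{Q}$. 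This already proves the statement rationally, and reduces the theorem to the integral algebraic identity $R^{W_J}/\mathcal{I}_J = (R/\mathcal{I})^{W_J}$.

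The main obstacle is precisely this integral refinement: one must show $\pi^\ast H^\bullet(G/P)$ is saturated in $H^\bullet(G/B)$ and equals the full $W_J$-invariant subgroup, and here one cannot simply average over $W_J$ without introducing denominators. I would resolve it through the algebraic model, proving directly that the natural map $R^{W_J}/\mathcal{I}_J \to (R/\mathcal{I})^{W_J}$ is an isomorphism over $\mathbb{Z}$: injectivity amounts to $R^{W_J}\cap \mathcal{I} = \mathcal{I}_J$, and surjectivity to lifting every $W_J$-invariant coset to a $W_J$-invariant polynomial. Both follow from the fact that $R$ is a free $R^{W_J}$-module and $R^{W_J}$ a free $R^W$-module with explicit integral bases (Schur, respectively Schubert, polynomials), which let one split off the invariants without denominators. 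Combining this identification with the injectivity and freeness supplied by Leray--Hirsch gives the desired ring isomorphism $H^\bullet(G/P)\cong H^\bullet(G/B)^{W_J}$.
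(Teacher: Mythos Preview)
The paper does not supply a proof of this theorem at all: it is quoted from \cite{Reineretal} as a background result, with no argument given. Consequently there is nothing in the paper to compare your proposal against.

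That said, your outline is a standard and essentially correct route to the statement. The Leray--Hirsch argument for the fibration $G/B\to G/P$ with fiber $P/B$ is the right mechanism for injectivity of $\pi^\ast$ and for the module-freeness of $H^\bullet(G/B)$ over $H^\bullet(G/P)$, and your identification of the rational invariants via the regular representation of $W_J$ on the coinvariant algebra is fine. You also correctly flag the genuine difficulty, namely the integral refinement $R^{W_J}/\mathcal{I}_J \cong (R/\mathcal{I})^{W_J}$. Your sketch of how to handle it (freeness of $R$ over $R^{W_J}$ and of $R^{W_J}$ over $R^{W}$ with explicit integral bases) is on target; in practice one makes this precise using divided--difference operators, which give an integral substitute for the averaging projector onto $W_J$-invariants and show directly that every $W_J$-invariant class in $R/\mathcal{I}$ lifts to $R^{W_J}$. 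With that clarification the argument goes through.
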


We observe that 
\[ H^{\bullet}(G/P) \hookrightarrow  H^{\bullet}(G/B). \]

\begin{remark} \label{cohG/P} Under this inclusion we recall from \cite{Reineretal}, the cohomology classes $[X(w)]$ where $w \in W^J$ lies in $H^{\bullet}(G/B)^{W_J}$ and forms a basis of $H^{\bullet}(G/P)$. More precisely, a basis of $H^{2d}(G/B)^{W_J}$ consists of the Schubert classes $[X(w)]$, where $w \in W^J$ and $X(w)$ is a codimension $d$ Schubert subvariety of $G/P$. This can be thought of as a generalisation of Ehresmann's theorem. 
\end{remark}

\section{Morphism from $\mathbb{P}^2$ to $G/B$} \label{sec:P2}

As in the previous section, we have $G = SL(n,\mathbb{C})$, $B$ denotes the Borel subgroup consisting of the diagonal matrices in $G$. We will begin this section by proving the following:

\begin{theorem}\label{consp2} There exists no nonconstant morphism from $\mathbb{P}^2$ to $G/B$.
\end{theorem}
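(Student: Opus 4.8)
The plan is to use the cohomology ring structure of $G/B$ and the well-understood cohomology of $\mathbb{P}^2$ to show that any morphism $f\colon \mathbb{P}^2\to G/B$ induces a trivial map on cohomology, and then to conclude that $f$ itself must be constant. Since $H^{\bullet}(\mathbb{P}^2,\mathbb{Z})=\mathbb{Z}[h]/(h^3)$ with $h$ the hyperplane class in degree $2$, the pullback $f^*$ is determined by where it sends the degree-$2$ generators of $H^{\bullet}(G/B)$. By Theorem~\ref{borelisom} (Borel's presentation), $H^{\bullet}(G/B)\cong R/{\cal{I}}$, and the degree-$2$ part is spanned by the images of $x_1,\dots,x_n$ subject only to the single linear relation $p_1=x_1+\cdots+x_n=0$; equivalently $H^2(G/B)$ has rank $n-1$ and is spanned by the simple-root classes (the Schubert divisor classes $[X(w_0 s_{\alpha_i})]$). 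Write $f^*(x_i)=a_i h$ for integers $a_i$ with $\sum_i a_i=0$.

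The key step is to exploit the quadratic relation coming from ${\cal{I}}$: the power sum $p_2=\sum_i x_i^2$ (or equivalently the second elementary symmetric polynomial $e_2$) vanishes in $H^{\bullet}(G/B)$, since it lies in ${\cal{I}}$. Applying $f^*$, we get $\sum_i a_i^2\, h^2=0$ in $H^4(\mathbb{P}^2,\mathbb{Z})$. But $h^2\neq 0$ in $H^4(\mathbb{P}^2,\mathbb{Z})\cong\mathbb{Z}$, so $\sum_i a_i^2=0$, which over the integers forces $a_i=0$ for all $i$. Hence $f^*$ annihilates all of $H^2(G/B)$, and since $H^{\bullet}(G/B)$ is generated in degree $2$ (it is a quotient of the polynomial ring $R$ generated by the $x_i$), $f^*$ is zero in all positive degrees.

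It remains to pass from "$f^*=0$ on positive cohomology" to "$f$ is constant." Here I would use that $G/B$ is a projective rational homogeneous space, in particular it is a smooth projective variety whose Picard group is generated by globally generated (indeed very ample) line bundles; concretely, the Schubert divisor classes $[X(w_0 s_{\alpha_i})]$ generate $\mathrm{Pic}(G/B)$ and their sum is ample. If $f$ were nonconstant, its image would be a positive-dimensional projective subvariety $Y\subseteq G/B$, and restricting an ample class $L$ to $Y$ and pulling back would give a class $f^*(c_1(L))\in H^2(\mathbb{P}^2,\mathbb{Z})$ with $f^*(c_1(L))^{\dim Y}\neq 0$ after cutting down — more carefully, choose a curve $C\subseteq\mathbb{P}^2$ mapping non-constantly to $G/B$ (a general line), so that $f|_C$ is a nonconstant morphism from a projective curve to $G/B$; then $\deg (f|_C)^*L>0$ for $L$ ample, forcing $f^*(c_1(L))\cdot h\neq 0$ in $H^{\bullet}(\mathbb{P}^2)$, contradicting $f^*(c_1(L))=0$. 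Therefore $f$ is constant.

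The main obstacle is the final step: ensuring that the vanishing of $f^*$ on cohomology genuinely forces constancy rather than merely that $f$ has "degree zero" in some weak sense. The clean way around this is the restriction-to-a-line argument combined with ampleness of a generator of $\mathrm{Pic}(G/B)$ (or of $\sum_i [X(w_0 s_{\alpha_i})]$): a nonconstant morphism from a projective curve to a projective variety has strictly positive degree against any ample line bundle, and this degree is computed by a cup product in $H^{\bullet}(\mathbb{P}^2)$ that we have just shown vanishes. I would also remark that the only property of $\mathbb{P}^2$ used is that $H^{\bullet}(\mathbb{P}^2,\mathbb{Z})$ is a truncated polynomial ring on one degree-$2$ generator with $h^2\neq 0$, so the argument is really about the incompatibility of this ring with the quadratic Borel relations.
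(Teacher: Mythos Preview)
Your proof is correct and follows essentially the same route as the paper: use Borel's presentation, set $f^*(x_i)=a_i h$, apply the relation $\sum x_i^2=0$ to force $\sum a_i^2=0$ and hence all $a_i=0$, so $f^*$ vanishes in positive degrees. The only difference is that you supply an explicit ampleness/restriction-to-a-line argument for the implication ``$f^*=0$ in positive degrees $\Rightarrow$ $f$ constant,'' which the paper simply asserts.
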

\begin{proof}
Let $\phi$ be such a morphism and 
\[ \phi^{*i}: H^{i}(G/B) \longrightarrow H^{i}(\mathbb{P}^{2})
\]
be the map induced at the level of cohomology. 
We have from \ref{borelisom}
\[ H^{\bullet}(G/B) \cong \mathbb{Z}[x_1,x_2,\ldots,x_n] /{\cal{I}} 
\]
where ${\cal{I}}$ is the proper ideal of $\mathbb{Z}[x_1,x_2,\ldots,x_n]$ consisting of elementary symmetric polynomials. We have $x_i$ lies in $H^2(G/B)$. In other words, degree of $x_i$ is $2$. And we have, 
\[ H^{\bullet}(\mathbb{P}^{2}) \cong \mathbb{Z}[t]/t^3.
\] 
where degree of $t$ is $2$. Since $\phi^*(H^{2}(G/B)) \subseteq H^{2}(\mathbb{P}^2)$, we can assume
\[ \phi^*(x_i) = a_it
\] for some $a_i \in \mathbb{Z}$. Since ${\cal{I}}$ is generated by power sum symmetric polynomials, we have 
\[ \sum x_i^2 = 0 
\] 
in $H^{\bullet}(G/B)$. Thus in the image we will have,
 \[\sum a_i^2 = 0.
 \] 
Since $a_i$ are all integer we have $a_i = 0$ for all $i$. Therefore $\phi^{*i} = 0$ for all $i >0$. Hence, the map $\phi$ is a constant map. 
\end{proof}

\begin{corollary} \label{h/b_h} Let $H$ be a reductive group and $B_H$ be a Borel subgroup of $H$. Then there is no non constant morphism from $\mathbb{P}^2$ to $H/B_H$.
\end{corollary}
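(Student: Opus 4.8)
The plan is to reduce the general reductive case to the result already proved for $G = SL(n,\mathbb{C})$ in Theorem \ref{consp2}. First I would observe that it suffices to treat $H$ connected semisimple: a reductive group $H$ has $H = Z^{\circ} \cdot [H,H]$ with $[H,H]$ semisimple, and any Borel $B_H$ contains the central torus $Z^{\circ}$, so $H/B_H$ is isomorphic to $H'/B_{H'}$ for $H' = [H,H]$ (and we may pass to the simply connected cover, which does not change the flag variety). Next, a connected semisimple simply connected group factors as a product $H' \cong H_1 \times \cdots \times H_r$ of simple simply connected factors, and correspondingly $H'/B_{H'} \cong \prod_i H_i/B_i$; a morphism $\mathbb{P}^2 \to \prod_i H_i/B_i$ is constant iff each component $\mathbb{P}^2 \to H_i/B_i$ is constant. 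So the whole statement follows once we know it for $H$ simple and simply connected.

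For the simple case, the argument of Theorem \ref{consp2} goes through verbatim with one substitution: by Borel's theorem (the general-Lie-type analogue of Theorem \ref{borelisom}), $H^{\bullet}(H/B_H, \mathbb{Q}) \cong \mathbb{Q}[\mathfrak{t}^*]/\mathcal{I}$, where $\mathcal{I}$ is generated by the $W$-invariant polynomials of positive degree on the Cartan $\mathfrak{t}^*$; in particular the degree-$2$ part $H^2(H/B_H)$ is spanned by characters, and $\mathcal{I}$ always contains a nonzero $W$-invariant quadratic form, namely the Killing form $\kappa$, which is positive definite on the (real span of) the weight lattice for a compact real form. Given a morphism $\phi : \mathbb{P}^2 \to H/B_H$, the pullback sends $H^2(H/B_H) \to H^2(\mathbb{P}^2) = \mathbb{Z}t$, so $\phi^*\lambda = a_\lambda t$ for each weight $\lambda$; applying $\phi^*$ to the relation $\kappa = 0$ in $H^{\bullet}(H/B_H, \mathbb{Q})$ gives $\kappa(v)\, t^2 = 0$ in $H^4(\mathbb{P}^2) = \mathbb{Z}t^2$, where $v$ is the image vector, hence $\kappa(v) = 0$, hence $v = 0$ by positive-definiteness, hence $\phi^{*i} = 0$ for all $i > 0$ and $\phi$ is constant.

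The only genuine obstacle is supplying the positive-definite $W$-invariant quadratic form in $\mathcal{I}$ for every simple type (for $SL(n)$ this was the concrete $\sum x_i^2$, and one must check it generates correctly modulo the linear relation $\sum x_i = 0$). This is classical: every finite Coxeter group acting irreducibly on a real vector space preserves, up to scalar, a unique positive-definite form, so such a quadratic invariant always exists and lies in $\mathcal{I}$ by definition. One small bookkeeping point is that $H^2(H/B_H)$ is the (co)character lattice, not literally a polynomial ring in the $x_i$; but the Borel presentation identifies it with the weight lattice of $H$ and the quadratic relation with $\kappa$ restricted there, so no difficulty arises. An alternative phrasing that avoids Borel's theorem entirely: embed $H \hookrightarrow SL(N,\mathbb{C})$ for some $N$ (every affine algebraic group admits a faithful finite-dimensional representation), which induces a closed immersion $H/B_H \hookrightarrow SL(N,\mathbb{C})/B_{SL(N)}$, and then any morphism $\mathbb{P}^2 \to H/B_H$ composed with this immersion is a morphism $\mathbb{P}^2 \to SL(N)/B_{SL(N)}$, constant by Theorem \ref{consp2}, hence $\phi$ is constant. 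I would present the embedding argument as the main proof since it is shortest and reuses Theorem \ref{consp2} as a black box.
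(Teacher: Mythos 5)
Your proposed main argument --- embedding $H/B_H$ into $SL(N,\mathbb{C})/B$ via a faithful representation and invoking Theorem~\ref{consp2} as a black box --- is exactly the paper's proof, and it is correct. The additional Borel-presentation/Killing-form discussion is a valid alternative but is not needed once you commit to the embedding route.
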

\begin{proof} Choose a faithful representation of $H$ in $SL(m,\mathbb{C})$ such that $B_H$ maps to a Borel subgroup $B$ of $SL(m,\mathbb{C})$. So we get a embedding of $H/B_H$ inside $SL(m,\mathbb{C})/B$. We now use theorem \ref{consp2} to conclude the proof.
\end{proof}

\begin{corollary}\label{gratogb} A morphism from $Gr(r,s)$ where $s \geq 3$ to $G/B$ is constant. 
\end{corollary}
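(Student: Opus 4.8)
We may assume $1\le r\le s-1$, since otherwise $Gr(r,s)$ is a point. The plan is to reduce to Theorem~\ref{consp2} by exhibiting a $\mathbb{P}^{2}$ inside $Gr(r,s)$ and exploiting that $\mathrm{Pic}(Gr(r,s))\cong\mathbb{Z}$. Since $s\ge 3$, at least one of $r$ and $s-r$ is $\ge 2$. If $r\ge 2$, I would fix an $(r+1)$-dimensional subspace $U\subseteq\mathbb{C}^{s}$; then $\{X\in Gr(r,s)\mid X\subseteq U\}\cong Gr(r,r+1)\cong\mathbb{P}^{r}$. If instead $s-r\ge 2$, I would fix an $(r-1)$-dimensional subspace $W\subseteq\mathbb{C}^{s}$; then $\{X\in Gr(r,s)\mid W\subseteq X\}\cong\mathbb{P}(\mathbb{C}^{s}/W)\cong\mathbb{P}^{s-r}$. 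Either way this yields a closed immersion $\iota\colon\mathbb{P}^{2}\hookrightarrow Gr(r,s)$; writing $\mathcal{O}_{Gr}(1)$ for the ample generator of $\mathrm{Pic}(Gr(r,s))\cong\mathbb{Z}$, the restriction $\iota^{*}\mathcal{O}_{Gr}(1)$ is ample on $\mathbb{P}^{2}$, hence isomorphic to $\mathcal{O}_{\mathbb{P}^{2}}(k)$ for some integer $k\ge 1$.

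Now let $\phi\colon Gr(r,s)\to G/B$ be a morphism and let $L$ be an ample line bundle on $G/B$. First I would restrict along $\iota$: by Theorem~\ref{consp2} the composite $\phi\circ\iota\colon\mathbb{P}^{2}\to G/B$ is constant, so $\iota^{*}\phi^{*}L\cong\mathcal{O}_{\mathbb{P}^{2}}$. On the other hand $\phi^{*}L\cong\mathcal{O}_{Gr}(d)$ for some $d\in\mathbb{Z}$, and therefore $\mathcal{O}_{\mathbb{P}^{2}}(dk)\cong\iota^{*}\phi^{*}L\cong\mathcal{O}_{\mathbb{P}^{2}}$; since $k\ge 1$ this forces $d=0$, i.e.\ $\phi^{*}L\cong\mathcal{O}_{Gr(r,s)}$ is trivial.

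To conclude I would invoke the standard fact that if the pullback of an ample line bundle along a morphism is trivial, then the morphism is constant: for any irreducible curve $C\subseteq Gr(r,s)$ one has $(\phi^{*}L\cdot C)=0$, whereas a nonconstant $\phi|_{C}$ would be finite onto a curve $C'\subseteq G/B$ and would satisfy $(\phi^{*}L\cdot C)=(\deg\phi|_{C})\,(L\cdot C')>0$ by ampleness of $L$; hence $\phi$ contracts every curve and its image is a point. The only genuine inputs are Theorem~\ref{consp2} and the construction of the embedded $\mathbb{P}^{2}$, so I do not expect a serious obstacle. Alternatively, one can bypass line bundles altogether: $Gr(r,s)$ is covered by $\mathbb{P}^{2}$'s as above, and any two of them can be joined by a chain in which consecutive members meet---which is just the connectedness of the Grassmann graph---so constancy of $\phi$ on each such $\mathbb{P}^{2}$ propagates to global constancy.
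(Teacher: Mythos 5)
Your proposal is correct and takes essentially the same route as the paper: embed a $\mathbb{P}^2$ into $Gr(r,s)$, apply Theorem \ref{consp2} to see the composite is constant, and use $\mathrm{Pic}(Gr(r,s))\cong\mathbb{Z}$ to upgrade this to global constancy. You merely spell out the explicit sub-Grassmannian giving the $\mathbb{P}^2$ and the line-bundle argument behind the paper's ``finite or constant'' dichotomy, both of which are fine.
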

\begin{proof} Since $\rm{Pic}(Gr(r,s))$ is $\mathbb{Z}$, we have every map from $Gr(r,s)$ to a projective variety is either finite or constant. Since $\mathbb{P}^2$ sits inside $Gr(r,s)$ whenever $s \geq 3$ and we have only constant morphism from $\mathbb{P}^2$ to $G/B$, the maps from $Gr(r,s)$ to $G/B$ must be constant as well.
\end{proof}
Let $V$ be a vector space of dimension $n$ and 
\[ 1 \leq i_1 < i_2 < \cdots < i_k = n. 
\] 
be a sequence of integers. We define $G(i_1,i_2,\ldots,i_k)$ the partial flag variety $G/P$ consisting of linear subspaces $L_{i_1}, L_{i_2},\ldots, L_{i_k}$ of $V$ such that $L_{i_j} \subset L_{i_{j+1}}$ and $\mathrm{dim} (L_{i_j}) = i_j$. 

\begin{remark} If $k = 2$ and $i_1 = d$ we obtain $G(i_1,i_2)$ as the Grassmannian variety $Gr(d,n)$. The full flag variety $G/B$ is obtained by choosing $i_j = j$. And any partial flag variety $G/P$ where $P$ contains $B$ can be obtained this way. 
\end{remark}

\begin{lemma}\label{p2togp}  There exists a $Gr(r,s)$ with $s \geq 3$ passing through each point of $G/P$ where $P$ is a parabolic subgroup which is not a Borel subgroup. 
\end{lemma}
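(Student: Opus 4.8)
The plan is to realize $G/P$ as a partial flag variety and to produce the required Grassmannian as a single fiber of a forgetful projection between flag varieties.

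First, since $P$ is a proper parabolic subgroup containing $B$ with $P \neq B$, the discussion preceding this lemma lets us write $G/P \cong G(i_1, \dots, i_k)$ for a strictly increasing sequence $0 = i_0 < i_1 < \cdots < i_k = n$ with $2 \le k \le n-1$. The $k$ consecutive differences $i_{m+1} - i_m$ (for $0 \le m \le k-1$) are all positive and sum to $n > k$, so at least one of them, say $i_{j+1} - i_j$, is $\ge 2$. This index $j$ drives the whole argument.

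Next, let $P'$ be the parabolic subgroup with $B \subset P \subset P'$ whose associated flag type is obtained by deleting the entry $i_j$, so that $G/P' \cong G(i_1, \dots, \widehat{i_j}, \dots, i_k)$, and let $\pi \colon G/P \to G/P'$ be the induced $G$-equivariant projection (when $j = 0$ or $j = k-1$ one reads $L_{i_0} = 0$, $L_{i_k} = V$). I would then use the standard fact that such a $\pi$ is a locally trivial fiber bundle whose fiber $P'/P$ is itself a flag variety: concretely, the fiber over a flag $L_{i_1} \subset \cdots \subset L_{i_{j-1}} \subset L_{i_{j+1}} \subset \cdots \subset V$ is the set of $i_j$-dimensional subspaces $M$ with $L_{i_{j-1}} \subset M \subset L_{i_{j+1}}$, and the assignment $M \mapsto M / L_{i_{j-1}}$ identifies it with $Gr(r,s)$, where $r = i_j - i_{j-1}$ and $s = i_{j+1} - i_{j-1}$. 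The key numerical observation is then immediate: $s = (i_{j+1} - i_j) + (i_j - i_{j-1}) \ge 2 + 1 = 3$.

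Finally, for any point $x \in G/P$ the fiber $\pi^{-1}(\pi(x))$ is a closed subvariety of $G/P$ that passes through $x$ and is isomorphic to $Gr(r,s)$ with $s \ge 3$, which is exactly the assertion. The only step that needs genuine care — and the only place I expect even a minor obstacle — is the bookkeeping at the two ends of the flag, where one must check that the hypothesis $2 \le k$ keeps the relevant neighbor $L_{i_{j\pm 1}}$ available and that the inequality $s \ge 3$ persists (using $i_1 \ge 2$ when $j = 0$, and symmetrically when $j = k-1$).
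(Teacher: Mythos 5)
Your approach is essentially the paper's: realize $G/P$ as $G(i_1,\dots,i_k)$ and exhibit the required Grassmannian as a fiber of the projection forgetting one entry of the flag, with the numerics $s=(i_{j+1}-i_j)+(i_j-i_{j-1})\ge 3$ doing the work. The one place your recipe literally breaks is the boundary case $j=0$, i.e.\ when the only gap of size $\ge 2$ is $i_1-i_0=i_1\ge 2$: there is no entry $i_0$ to delete, the map $\pi$ degenerates to the identity, and the formula $r=i_j-i_{j-1}$ becomes undefined. This case genuinely occurs --- for example $G/P_{\alpha_1}=G(2,3,\dots,n)$, the variety at the heart of Section 5, has all differences equal to $1$ except $i_1-i_0=2$. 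You flag exactly this point, and the fix is the one you hint at: when $j=0$, delete $i_1$ instead, so that the fiber of $G(i_1,\dots,i_k)\to G(i_2,\dots,i_k)$ is $Gr(i_1,i_2)$ with $s=i_2\ge i_1+1\ge 3$. That patched case is precisely the paper's first case ($i_1>1$); for $i_1=1$ the paper instead locates the smallest index where the flag skips a dimension and gets a projective-space fiber, whereas your uniform rule (delete the left endpoint of any gap $\ge 2$, valid for every $j\ge 1$) is a cleaner single statement. With that one boundary case written out, the proof is complete and supports Remark \ref{pg2} just as the paper's does.
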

\begin{proof} 
Since $P$ is not a Borel subgroup we have $n \geq 3$.
We are already done for the case of Grassmannian variety $Gr(d,n)$. So we can assume $k >2$ and $G/P = G(i_1,i_2,\ldots,i_k)$.
If $P$ is not $B$ then there either $i_1 > 1$ or $i_1 = 1$ and there exists a smallest $j$ such that $i_{j+1} > i_{j} + 1$. If $i_1 > 1$, then we have the fibers of the projection 
\[ G(i_1,i_2,\ldots,i_k) \longrightarrow G(i_2,i_3,\ldots,i_k)
\]
is $Gr(i_1,i_2)$ with $i_2 \geq 3$, hence we are done.

If $i_1 = 1$, choose the smallest $j$ such that $i_j = j$ and $i_{j+1} >  j +1$. If $j = 1$, ie. $i_2 > 2$, we have the fibres of the projection
\begin{equation}\label{eqngi}
 G(1,i_2,\ldots, i_k) \longrightarrow G(i_2,i_3,\ldots, i_k)
\end{equation}   
is  $\mathbb{P}^{r-1}$ where $r = i_2-1 \geq 2$. If $j \geq 2$, then we have the fibre of 
\begin{equation}\label{eqngi}
 G(i_1,i_2,\ldots, i_k) \longrightarrow G(i_1,i_2,\ldots,i_{j-1},i_{j+1},\ldots, i_k)
\end{equation} 
is $\mathbb{P}^{r-1}$ where $ r = i_{j+1}-i_{j-1} \geq 3$.

 This proves the lemma. 
\end{proof}

\begin{remark} \label{pg2} Let $P$ be a parabolic subgroup which is not a maximal parabolic or a Borel. The proof of the above lemma provides a $Gr(r,s)$-fibration $G/P \rightarrow G/P'$ for some $s \geq 3$ where $P'$ is a parabolic subgroup containing $P$.
\end{remark}
\begin{corollary} \label{gmptohb} Let $H$ be a reductive group and $B_{H}$ be a Borel subgroup of $H$. Fix a parabolic subgroup $P$ of $G$ and a non constant morphism $\phi: G/P \rightarrow H/B_{H}$. Then $P$ is a Borel subgroup. 
\end{corollary}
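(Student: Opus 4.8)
The plan is to prove the contrapositive: assuming $P$ is not a Borel subgroup, I will show that every morphism $\phi : G/P \to H/B_H$ is constant. The argument is an induction on $\dim(G/P)$ in which each inductive step peels off one Grassmannian factor using the fibrations produced in Lemma \ref{p2togp} and Remark \ref{pg2}, reducing everything to the Grassmannian case already handled.

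The basic building block is the following: any morphism $\psi : Gr(r,s) \to H/B_H$ with $s \geq 3$ is constant. This follows by composing $\psi$ with a closed embedding $H/B_H \hookrightarrow SL(m,\mathbb{C})/B$ as in the proof of Corollary \ref{h/b_h} and invoking Corollary \ref{gratogb}; equivalently, since $\mathrm{Pic}(Gr(r,s)) = \mathbb{Z}$ every morphism from $Gr(r,s)$ to a projective variety is finite or constant, and restricting $\psi$ to a copy of $\mathbb{P}^2 \subset Gr(r,s)$ and applying Corollary \ref{h/b_h} rules out the finite case.

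Now for the induction. If $P = G$ then $G/P$ is a point and there is nothing to prove. If $P$ is a maximal parabolic then, since $P$ is not a Borel subgroup, necessarily $n \geq 3$ and $G/P \cong Gr(d,n)$ with $n \geq 3$, so $\phi$ is constant by the building block. Otherwise $P$ is neither maximal nor Borel, and Remark \ref{pg2} provides a $Gr(r,s)$-fibration $\pi : G/P \to G/P'$ with $s \geq 3$ and $P \subsetneq P'$; in particular $P'$ is again a non-Borel parabolic (it strictly contains $P \supsetneq B$) and $\dim(G/P') < \dim(G/P)$. By the building block $\phi$ is constant on every fiber of $\pi$, hence $\phi$ contracts the fibers of $\pi$. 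Since $\pi$ is a fiber bundle with connected (projective) fibers we have $\pi_* \mathcal{O}_{G/P} = \mathcal{O}_{G/P'}$, so the rigidity lemma yields a morphism $\phi' : G/P' \to H/B_H$ with $\phi = \phi' \circ \pi$. By the induction hypothesis $\phi'$ is constant, and therefore so is $\phi$.

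The step requiring the most care is the factoring $\phi = \phi' \circ \pi$: one must check that the projection $\pi$ supplied by the proof of Lemma \ref{p2togp} is genuinely a Grassmannian fibration with connected fibers, so that $\pi_* \mathcal{O}_{G/P} = \mathcal{O}_{G/P'}$, and then apply the rigidity lemma to descend a morphism into the separated (indeed projective) target $H/B_H$ that is constant along fibers. The other ingredients are immediate: the relevant fibers are genuine Grassmannians $Gr(r,s)$ with $s \geq 3$ by Lemma \ref{p2togp}, and $P$ non-Borel forces $B \subsetneq P$, so non-Borel-ness propagates to $P'$ and the induction stays inside the class of non-Borel parabolics until it terminates at a maximal parabolic (or at $G$).
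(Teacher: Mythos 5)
Your proposal is correct and follows essentially the same route as the paper: reduce to $H = G$ via an embedding $H/B_H \hookrightarrow SL(m,\mathbb{C})/B$, handle the maximal parabolic case by Corollary \ref{gratogb}, and otherwise use the $Gr(r,s)$-fibration of Remark \ref{pg2} to factor $\phi$ through $G/P'$ and iterate. The only difference is that you spell out the descent step ($\pi_*\mathcal{O}_{G/P} = \mathcal{O}_{G/P'}$ plus rigidity) which the paper leaves implicit.
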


\begin{proof} We know that any $H/B_H$ embeds inside a $SL(N,\C)/B$ for some $N$, where $B$ is a Borel subgroup of $SL(N,\C)$. So we are reduced to the case where $H =SL(N,\C)$ and $B_H$ is a Borel subgroup of $SL(N,\C)$.

We assume on the contrary that $P$ is not a Borel subgroup. If $P$ is a maximal parabolic subgroup then by lemma \ref{gratogb} the map $\phi$ must be constant.

We can therefore assume $P$ not a Borel or a maximal parabolic subgroup. From corollary \ref{pg2} we obtain a parabolic subgroup $P'$ containing $P$ such that $G/P \rightarrow G/P'$ is $Gr(r,s)$-fibration with $s \geq 3$. Since $\phi$ is constant on $Gr(r,s)$ we have $\phi$ factors through $G/P'$. Repeating the argument we can assume that $\phi$ factors through a $G/Q$ where $Q$ is a maximal parabolic subgroup and hence we conclude that $\phi$ is constant.

\end{proof}

\begin{remark}  Shrawan Kumar \cite{kumarsconj} has extended corollary \ref{gmptohb} to an arbitrary simple group $G$. 
\end{remark}

\section{Maps from $\mathbb{P}^3$ to $G/P$ for a minimal parabolic subgroup}\label{sec:P3}

We assume the notations from the previous sections. We thus have $P_{\alpha}$ the minimal parabolic subgroup $B \cup Bs_{\alpha}B$. When $\alpha = \alpha_1$ we will show that there is no non constant morphism from $\mathbb{P}^3$ to $G/{P_{\alpha}}$. Since $G/P_{\alpha_1} \cong G/P_{\alpha_{n-1}}$ we conclude that there is no non constant morphism from $\mathbb{P}^3$ to $G/{P_{\alpha_{n-1}}}$ as well.  However, when we have any other minimal parabolic subgroup $P_{\alpha_{j}}$, $j\neq 1,n-1$ we will show that there are non constant morphisms from $\mathbb{P}^3$ to $G/{P_{\alpha_j}}$.

Fix a basis $e_1, e_2, \ldots, e_n$ of $V$. Define the subspaces $M_i$ to be the span of $e_1, e_2 \ldots, e_i$. Let $D_k$ denote the Schubert divisor in the $Gr(k,n)$ which is defined as 
\[ D_k = \{ F \in Gr(k,n) | F \cap M_{n-k} \neq 0 \}. 
\] 
We define the following two codimension 2 Schubert subvarieties of the Grassmannian $Gr(k,n)$:  
\[ D_{k,k+1} := \{ F \in Gr(k,n) | ~F \cap M_{n-k-1} \neq 0 \} 
\] 
\[ D_{k,k-1} := \{ F \in Gr(k,n) | ~\rm{dim}(F \cap M_{n-k+1} \geq 2) \}.
\] 
We note that $D_{n,n+1}$ and $D_{1,0}$ are empty sets.
We prove the following lemmas.

\begin{lemma}\label{lm41} Let $1\leq k \leq n$. We have the following relation in $H^4(Gr(k,n))$ 
\[D_{k}.D_{k} = D_{k,k-1} + D_{k,k+1}.\]
\end{lemma}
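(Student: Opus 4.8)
The plan is to work inside the cohomology ring of $Gr(k,n)$ using the Schubert class basis and the Pieri rule, rather than attempting a geometric intersection-theoretic computation directly. Recall that $D_k$ is the Schubert divisor, i.e.\ the special Schubert class $\sigma_1$ (the class of the hyperplane section in the Pl\"ucker embedding). The two codimension $2$ classes $D_{k,k+1}$ and $D_{k,k-1}$ are precisely the two Schubert varieties of codimension $2$: in partition notation for $Gr(k,n)$, $D_{k,k+1}$ corresponds to the partition $(2)$ (a single row of length $2$, imposing $F\cap M_{n-k-1}\neq 0$, a rank condition ``two steps'' deep in one index) and $D_{k,k-1}$ corresponds to the partition $(1,1)$ (a column, imposing $\dim(F\cap M_{n-k+1})\geq 2$). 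So the claimed identity $D_k\cdot D_k = D_{k,k-1}+D_{k,k+1}$ is exactly the classical Pieri formula $\sigma_1\cdot\sigma_1 = \sigma_2 + \sigma_{1,1}$.

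First I would make the dictionary precise: translate each of $D_k$, $D_{k,k+1}$, $D_{k,k-1}$ into its indexing element of $I(k,n)$ (equivalently its partition $\lambda\subseteq k\times(n-k)$), using the definitions of Schubert cells and varieties given in the preliminaries via the rank conditions $\dim(F\cap M_{i_j})\geq j$. Concretely, $D_k = X(w)$ for $w$ with $(i_1,\dots,i_k) = (n-k, n-k+2, n-k+3,\dots, n)$, which is $\sigma_1$; $D_{k,k+1}$ has $(i_1,\dots) = (n-k-1, n-k+2,\dots,n)$, which is $\sigma_2$; and $D_{k,k-1}$ has $(i_1,i_2,\dots) = (n-k, n-k+1, n-k+3,\dots,n)$, which is $\sigma_{1,1}$. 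I would check the two degenerate cases noted in the text — $D_{n,n+1} = D_{1,0} = \emptyset$ — are consistent: when $k=n$ everything is a point so both sides vanish, and when $k=1$ the Grassmannian is $\mathbb{P}^{n-1}$, $\sigma_{1,1}=0$, and $\sigma_1^2 = \sigma_2$, matching $D_{1,0}=\emptyset$.

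With the dictionary in hand, the identity is either quoted from the Pieri rule or, to keep things self-contained, proved by a direct local computation: choose two generic translates $gM_\bullet$, $g'M_\bullet$ of the reference flag and compute $D_k\cap g\cdot D_k$ set-theoretically, showing it breaks into the two loci where the one-dimensional intersections with the two codimension-one subspaces either ``add up'' to a $2$-plane meeting the smaller subspace $M_{n-k-1}$ (giving $D_{k,k+1}$) or coincide to force a $2$-dimensional intersection with $M_{n-k+1}$ (giving $D_{k,k-1}$), with the intersection transverse along each component so that multiplicities are $1$. Alternatively, and most cleanly, one pairs both sides against the dual Schubert basis of $H^{2(\dim Gr - 2)}(Gr(k,n))$ and uses the projection formula together with the fact that $D_k$ is the ample generator: the coefficient of a codimension-$2$ class $\sigma_\mu$ in $\sigma_1^2$ equals $\int \sigma_1^2\cdot \sigma_{\mu^\vee}$, which is the number of standard Young tableau-type paths and is $1$ exactly for $\mu\in\{(2),(1,1)\}$ and $0$ otherwise.

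The main obstacle is purely bookkeeping: getting the index sequences in $I(k,n)$ (equivalently the partitions) attached to the three divisors exactly right, and handling the boundary cases $k=1$ and $k=n$ where one of the two codimension-$2$ classes degenerates to zero, so that the stated identity still reads correctly. Once the translation to $\sigma_1^2 = \sigma_2 + \sigma_{1,1}$ is in place there is no further difficulty, since this is the most basic instance of the Pieri rule. I would therefore present the proof as: (i) identify the classes, (ii) invoke (or briefly reprove) Pieri, (iii) remark on the degenerate cases.
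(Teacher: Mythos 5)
Your proposal is correct, and your primary route is genuinely different from the paper's. The paper proves the identity by a bare-hands set-theoretic computation: it replaces one copy of $D_k$ by the linearly equivalent divisor $D_k'$ defined by $M'_{n-k}=\langle M_{n-k-1},e_{n-k+1}\rangle$, so that $M_{n-k}\cap M'_{n-k}=M_{n-k-1}$ and $M_{n-k}+M'_{n-k}=M_{n-k+1}$, and then observes that a $k$-plane $F$ meeting both subspaces either meets their intersection (giving $D_{k,k+1}$) or meets them in two independent lines and hence meets $M_{n-k+1}$ in a $2$-plane (giving $D_{k,k-1}$); this is exactly the ``local computation'' you offer as your second alternative, except that the paper uses two specific translates sharing a hyperplane rather than generic ones. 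Your main route --- translating $D_k$, $D_{k,k+1}$, $D_{k,k-1}$ into the index sequences $(n-k,n-k+2,\dots,n)$, $(n-k-1,n-k+2,\dots,n)$, $(n-k,n-k+1,n-k+3,\dots,n)$, i.e.\ the classes $\sigma_1$, $\sigma_2$, $\sigma_{1,1}$, and invoking the Pieri rule $\sigma_1^2=\sigma_2+\sigma_{1,1}$ --- is a clean and fully rigorous alternative that also disposes of the multiplicity question, which the paper's set-theoretic argument leaves implicit (it identifies the support of the intersection but does not explicitly verify that each component appears with coefficient $1$). Your dictionary checks out dimensionally, and your treatment of the degenerate cases $k=1$ and $k=n$ is a useful addition the paper only mentions in passing. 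What the paper's approach buys is self-containedness and uniformity with the immediately following Lemmas 5.2 and 5.3, which are proved by the same translate-and-decompose technique; what yours buys is brevity and a multiplicity-free conclusion by citation to a standard result.
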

\begin{proof} To prove the lemma we would intersect the Schubert divisors fixing two different complimentary $n-k$ dimensional vector subspaces. Let $M'_{n-k}$ is the vector space generated by $M_{n-k-1}$ and $e_{n-k+1}$. Let $D'_k := \{ F \in Gr(k,n) | ~F \cap M'_{n-k} \neq 0 \} $ be the divisor linearly equivalent to $D_k$ defined with respect to $M'_{n-k}$. Then we can see that 
\[ D_k \cap D'_k = \{ F\in Gr(k,n) | ~F \cap M_{n-k} \neq 0\} \cap \{F \in Gr(k,n)|~ F\cap M'_{n-k} \neq 0\}
\]
\[ =\{F\in Gr(k,n) | F\cap M_{n-k-1} \neq 0 \} \cup \{\rm{dim}(F \cap M_{n-k+1}) \geq 2\}
\]
which by definition is $D_{k,k+1} \cup D_{k,k-1}$. Hence, the lemma follows.
\end{proof}

\begin{lemma}\label{lm42} We have the following relation in the cohomology $H^4(G(k,k+1,n))$ 
\[D_{k}.D_{k+1} = D_{k,k+1} + D_{k+1,k}.\]
\end{lemma}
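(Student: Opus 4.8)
The plan is to mimic the proof of Lemma \ref{lm41}, but now working inside the two-step flag variety $G(k,k+1,n)$, whose points are pairs $(F_k \subset F_{k+1})$ with $\dim F_k = k$ and $\dim F_{k+1} = k+1$. On this variety the two natural Schubert divisors are $D_k = \{(F_k \subset F_{k+1}) : F_k \cap M_{n-k} \neq 0\}$ (pulled back from $Gr(k,n)$) and $D_{k+1} = \{(F_k \subset F_{k+1}) : F_{k+1} \cap M_{n-k-1} \neq 0\}$ (pulled back from $Gr(k+1,n)$), and these generate $\mathrm{Pic}(G(k,k+1,n)) \cong \mathbb{Z}^2$. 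First I would record the precise definitions of the codimension-$2$ classes appearing on the right: $D_{k,k+1}$ is the locus where $F_k \cap M_{n-k-1} \neq 0$, and $D_{k+1,k}$ is the locus where $\dim(F_{k+1} \cap M_{n-k}) \geq 2$ (equivalently $F_{k+1} \cap M_{n-k-1+1}$ is too big); both are irreducible Schubert varieties of codimension $2$.

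The key step is the same deformation trick: replace $D_{k+1}$ by the linearly equivalent divisor $D'_{k+1} = \{(F_k \subset F_{k+1}) : F_{k+1} \cap M'_{n-k-1} \neq 0\}$, where $M'_{n-k-1}$ is spanned by $M_{n-k-2}$ together with $e_{n-k}$ — chosen so that $M_{n-k-1}$ and $M'_{n-k-1}$ are in general position, with $M_{n-k-1} \cap M'_{n-k-1} = M_{n-k-2}$ and $M_{n-k-1} + M'_{n-k-1} = M_{n-k}$. Then $D_k \cdot D_{k+1} = [D_k \cap D'_{k+1}]$ as cycles (the intersection being proper, of the expected dimension), and I would compute this set-theoretic intersection directly: a pair $(F_k \subset F_{k+1})$ lies in $D_k \cap D'_{k+1}$ iff $F_k \cap M_{n-k} \neq 0$ and $F_{k+1} \cap M'_{n-k-1} \neq 0$. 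The incidence $F_k \subset F_{k+1}$ forces a case split exactly as in Lemma \ref{lm41}: either the line in $F_k$ meeting $M_{n-k}$ already lies in the common part $M_{n-k-1}$ (giving $F_k \cap M_{n-k-1} \neq 0$, i.e.\ $D_{k,k+1}$), or else $F_{k+1}$ must contain both a vector of $F_k \cap M_{n-k}$ and a vector of $M'_{n-k-1}$, which together span a $2$-dimensional subspace inside $M_{n-k-1} + M'_{n-k-1} = M_{n-k}$, giving $\dim(F_{k+1} \cap M_{n-k}) \geq 2$, i.e.\ $D_{k+1,k}$. Conversely each of $D_{k,k+1}$ and $D_{k+1,k}$ is visibly contained in $D_k \cap D'_{k+1}$, so $D_k \cap D'_{k+1} = D_{k,k+1} \cup D_{k+1,k}$, and taking classes gives the claimed identity in $H^4(G(k,k+1,n))$.

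The main obstacle is making the intersection-theoretic bookkeeping rigorous rather than merely set-theoretic: one must check that $D_k$ and $D'_{k+1}$ meet properly (so that the product of classes is represented by the reduced intersection cycle with multiplicity one along each component), and that $D_{k,k+1}$ and $D_{k+1,k}$ are precisely the two irreducible components, each appearing with multiplicity $1$. This can be handled by a transversality/dimension count along the open Schubert cell of each component, or alternatively — and perhaps more cleanly — by expanding $D_k \cdot D_{k+1}$ in the Schubert basis of $H^4(G(k,k+1,n))$ via the projection formula from the two Grassmannian factors and matching coefficients, using that both sides are effective of the correct degree. Either route avoids any appeal to the erroneous formula mentioned in the introduction, keeping the argument elementary. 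A minor point to verify separately is the degenerate range: when $k+1 = n$ the class $D_{k+1,k}$ should be interpreted in $Gr(n,n)$, which is a point, so the statement degenerates appropriately, and when $k = 1$ one checks $D_{k,k+1}$ behaves as expected; these boundary cases I would dispatch in a sentence.
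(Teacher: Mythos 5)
Your overall strategy --- represent the two divisor classes by explicit Schubert divisors, compute the set-theoretic intersection, and identify its two components --- is the same as the paper's. However, the deformation you introduce is both unnecessary and the source of a genuine error. With $M'_{n-k-1} = M_{n-k-2} + \langle e_{n-k}\rangle$ you have $M_{n-k-1}\cap M'_{n-k-1} = M_{n-k-2}$, so ``the line in $F_k$ meeting $M_{n-k}$ already lies in the common part $M_{n-k-1}$'' is not the correct alternative in your case split; what the degenerate case actually yields is $F_k\cap M'_{n-k-1}\neq 0$, not $F_k\cap M_{n-k-1}\neq 0$. Consequently the asserted set-theoretic equality $D_k\cap D'_{k+1}= D_{k,k+1}\cup D_{k+1,k}$ is false, as is the claim that $D_{k,k+1}$ is ``visibly contained'' in $D_k\cap D'_{k+1}$. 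Concretely, take $n=4$, $k=1$, so $M_3=\langle e_1,e_2,e_3\rangle$, $M_2=\langle e_1,e_2\rangle$, $M'_2=\langle e_1,e_3\rangle$: the pair $F_1=\langle e_3\rangle\subset F_2=\langle e_3,e_4\rangle$ lies in $D_1\cap D'_2$, but $F_1\cap M_2=0$ and $\dim(F_2\cap M_3)=1$, so it lies in neither $D_{1,2}$ nor $D_{2,1}$. It does lie in the translate $\{F_1\cap M'_2\neq 0\}$.

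The repair is easy, in two ways. Either (i) keep your deformation but identify the first component as $\{F_k\cap M'_{n-k-1}\neq 0\}$, a $GL(V)$-translate of $D_{k,k+1}$ with the same cohomology class, so the displayed identity in $H^4$ still follows; or (ii) --- which is what the paper does --- drop the deformation entirely. Since $D_k$ constrains $F_k$ via $M_{n-k}$ while $D_{k+1}$ constrains $F_{k+1}$ via the smaller space $M_{n-k-1}\subset M_{n-k}$, the two given representatives already meet properly, and the case split ``either $F_k\cap M_{n-k-1}\neq 0$, or else $F_k\cap M_{n-k}$ and $F_{k+1}\cap M_{n-k-1}$ are two independent lines inside $F_{k+1}\cap M_{n-k}$'' gives exactly $D_{k,k+1}\cup D_{k+1,k}$ with no translates appearing. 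Your concern about properness and multiplicity one is legitimate (the paper leaves it implicit); the quickest rigorous route is the one the paper mentions in passing, namely that the relation is an instance of Monk's formula.
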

\begin{proof} We note that the intersection of $D_{k}$ with $D_{k+1}$ is happening at $G(k,k+1,n)$. $D_{k+1}$ is linearly equivalent to $\{ (F,E) \in G(k,k+1,n) | ~ E \cap M_{n-k-1} \neq 0\}$ in $G(k,k+1,n)$. We observe that both $D_{k,k+1}$ and $D_{k+1,k}$ lie in the intersection of $D_{k}$ and $D_{k+1}$. If we choose a $F$ from the intersection not in $D_{k,k+1}$ we observe that $F \cap M_{n-k-1} = 0$ and $F\cap M_{n-k} \neq 0$. Then $F \cap M_{n-k}$ and $E\cap M_{n-k-1}$ are non zero and linearly independent, so they span atleast two dimensional vector space and it is contained in $E \cap M_{n-k}$. So we have, 
 \[ \{ F \in Gr(k,n) |~F\cap M_{n-k} \neq 0 \} \cap \{ (F,E) \in G(k,k+1,n) | ~ E \cap M_{n-k-1} \neq 0\}\]
\[ = \{F \in Gr(k,n) |~F \cap M_{n-k-1} \neq 0 \} \cup \{ E\in Gr(k+1,n) |~\rm{dim} (E \cap M_{n-k}) \geq 2\}.  
\] 
Hence, the lemma follows.
\end{proof}

Let $E_1$ denote the codimension $3$ Schubert cycle defined by the Schubert variety  $\{F\in Gr(2,n) |~F \cap M_{n-3} \neq 0 ~\rm{and} ~F \subset M_{n-1}\}$. Let $E_2$ be the codimension $3$ Schubert cycle defined by the Schubert variety $\{F\in Gr(2,n) |~F \cap M_{n-4} \neq 0 \}$. 

\begin{lemma}\label{lm3} We have the following relations in $H^6(Gr(2,n))$ : 
\begin{itemize}
\item[(i)] $D_{2,1}. D_2 = E_1$.
\item[(ii)] $D_{2,3}. D_2 = E_1 + E_2$. 
\end{itemize}
\end{lemma}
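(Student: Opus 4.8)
The plan is to compute the two products $D_{2,1}\cdot D_2$ and $D_{2,3}\cdot D_2$ inside $H^6(Gr(2,n))$ by the same kind of moving-lemma argument used in Lemmas \ref{lm41} and \ref{lm42}: represent one of the two factors by a linearly equivalent Schubert variety defined with respect to a generic translate of the reference flag, so that the two representatives meet transversally (or at least generically transversally along each component), identify the components of the intersection as Schubert varieties, and match them against $E_1$ and $E_2$. Since we are in $Gr(2,n)$, every subspace $F$ in question is a plane, so the incidence conditions $\dim(F\cap M_r)\ge j$ are easy to track by hand.

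For part (i), I would start from $D_{2,1}=\{F:\dim(F\cap M_{n-1})\ge 2\}=\{F:F\subset M_{n-1}\}$ and $D_2=\{F:F\cap M_{n-2}\ne 0\}$. To intersect properly I replace $D_2$ by $D_2'=\{F:F\cap M_{n-2}'\ne 0\}$ for a generic hyperplane-type subspace $M_{n-2}'$ of dimension $n-2$ in general position relative to $M_{n-1}$; concretely one may take $M_{n-2}'=\mathrm{span}(M_{n-3},e_n)$ or a generic perturbation thereof. For $F\subset M_{n-1}$, the subspace $F\cap M_{n-2}'$ equals $F\cap(M_{n-2}'\cap M_{n-1})$, and $M_{n-2}'\cap M_{n-1}$ is an $(n-3)$-dimensional subspace lying in $M_{n-1}$; in suitable coordinates it is exactly $M_{n-3}$. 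Hence the intersection is $\{F:F\subset M_{n-1},\ F\cap M_{n-3}\ne 0\}$, which is precisely the Schubert variety defining $E_1$; a codimension count ($3=1+2$, with $D_2$ of codimension $1$ and $D_{2,1}$ of codimension $2$) confirms the intersection is of the expected dimension and the coefficient is $1$, giving $D_{2,1}\cdot D_2=E_1$.

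For part (ii), I take $D_{2,3}=\{F:F\cap M_{n-3}\ne 0\}$ and again use $D_2'=\{F:F\cap M_{n-2}'\ne 0\}$ with $M_{n-2}'$ generic relative to $M_{n-3}$. A plane $F$ lies in $D_{2,3}\cap D_2'$ iff it meets both $M_{n-3}$ and $M_{n-2}'$ nontrivially. As in Lemma \ref{lm41}, there are two ways this can happen: either $F$ meets the intersection $M_{n-3}\cap M_{n-2}'$, which (for generic $M_{n-2}'$) is $(n-5)$-dimensional, giving the component $\{F:F\cap M_{n-4}\ne 0\}=E_2$ after identifying $M_{n-3}\cap M_{n-2}'$ with $M_{n-5}$ — wait, one must be careful here with the dimension bookkeeping, so let me instead phrase it as: $F$ meets the generic $(n-4)$-dimensional subspace $M_{n-3}\cap(M_{n-3}+e_{n-2}\text{-type perturbation})$, yielding the locus $\{F:F\cap M_{n-4}\ne 0\}=E_2$; or $F$ contains a plane spanned by a nonzero vector of $F\cap M_{n-3}$ together with a nonzero vector of $F\cap M_{n-2}'$, which forces $\dim(F\cap(M_{n-3}+M_{n-2}'))\ge 2$, i.e. $F\subset M_{n-1}$ (after noting $M_{n-3}+M_{n-2}'$ is generically all of some $M_{n-1}$), and then the condition $F\cap M_{n-3}\ne 0$ persists, giving $\{F:F\subset M_{n-1},\ F\cap M_{n-3}\ne 0\}=E_1$. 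Thus $D_{2,3}\cdot D_2=E_1+E_2$, each with coefficient $1$.

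The main obstacle is getting the genericity and the dimension/coefficient bookkeeping exactly right: one needs to choose the translate $M_{n-2}'$ so that the two listed loci are genuinely the only components, that each occurs with multiplicity one (transversality, or a degeneration argument in the Chow ring as in the proofs of Lemmas \ref{lm41}–\ref{lm42}), and that no lower-dimensional spurious component contributes. The cleanest way to finish is probably not to argue transversality directly but, following the style already established in this section, to write the set-theoretic intersection of the two linearly equivalent Schubert representatives as the stated union of Schubert varieties and then invoke that these Schubert classes are reduced of the correct codimension, so the product in cohomology is exactly the sum of their classes.
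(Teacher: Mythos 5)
Your overall strategy is the same as the paper's: replace $D_2$ by a linearly equivalent divisor $D_2'$ cut out by a translated $(n-2)$-plane, write the set-theoretic intersection with the other factor as a union of Schubert varieties of the standard flag, and read off the class. Part (i) matches the paper's argument essentially verbatim (the paper takes $M_{n-2}''=\langle e_1,\dots,e_{n-3},e_n\rangle$, so that $M_{n-1}\cap M_{n-2}''=M_{n-3}$ and the intersection with $D_{2,1}$ is exactly $E_1$).

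The one step you need to repair is the choice of translate in part (ii). A translate $M_{n-2}'$ that is \emph{fully generic} relative to $M_{n-3}$ satisfies $\dim(M_{n-3}\cap M_{n-2}')=n-5$ and $M_{n-3}+M_{n-2}'=V$; in that case the set-theoretic intersection $D_{2,3}\cap D_2'$ is irreducible (it is the closure of the locus of planes spanned by a line of $M_{n-3}$ and a line of $M_{n-2}'$) and is not a union of Schubert varieties of the standard flag, so the decomposition argument does not go through as you first set it up. What is needed --- and what you half-identify before the sentence becomes garbled (``$M_{n-3}\cap(M_{n-3}+e_{n-2}\text{-type perturbation})$'' does not define a subspace, and ``$M_{n-3}+M_{n-2}'$ is generically all of some $M_{n-1}$'' is false for a truly generic translate) --- is a translate in \emph{special} position: one with $\dim(M_{n-3}\cap M_{n-2}')=n-4$, equivalently $M_{n-3}+M_{n-2}'=M_{n-1}$. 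The paper's explicit choice $M_{n-2}'''=\langle e_1,\dots,e_{n-4},e_{n-2},e_{n-1}\rangle$ achieves exactly this: $M_{n-3}\cap M_{n-2}'''=M_{n-4}$ and $M_{n-3}+M_{n-2}'''=M_{n-1}$. With that choice your two-case analysis (either $F$ meets $M_{n-4}$, giving $E_2$; or $F$ is spanned by independent lines in $M_{n-3}$ and $M_{n-2}'''$, hence $F\subset M_{n-1}$ and $F\in E_1$) is precisely the paper's proof. One should still record that this special position remains generic enough for the intersection to be proper and reduced, a point the paper also leaves implicit.
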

\begin{proof} (i) Let $M_{n-2}''$ be the $n-2$ dimensional vector space spanned by $e_1,e_2, \ldots, e_{n-3}, e_{n}$. Let $D_2''$ be the divisor linearly equivalent to $D_2$ defined by 
$\{ F \in Gr(2,n) | ~F \cap M''_{n-2} \neq 0 \}$. We have $F \subset M_{n-1}$ as it is in $D_{2,1}$. It follows that $F \cap M_{n-2}'' \subset M_{n-1} \cap M_{n-2}'' = M_{n-3}$ is nonzero. Therefore, $F \cap M_{n-3} \neq 0$.

(ii) Let $M_{n-2}'''$ be the $n-2$ dimensional vector space spanned by $e_1,e_2,\ldots, e_{n-4},e_{n-2},e_{n-1}$. Let $D_2'''$ be the divisor linearly equivalent to $D_2$ defined by 
$\{ F \in Gr(2,n) | ~F \cap M'''_{n-2} \neq 0 \}$. Let $F$ be in the intersection of $D_{2,3}$ and $D'''_{2}$. If $F \cap M_{n-4} \neq 0 $ then $F$ is the component $E_2$. So we assume $F \cap M_{n-4} = 0$. Notice that $M_{n-4} = M_{n-2} \cap M_{n-2}'''$. But on the other hand $F\cap M_{n-2} \neq 0 $ and $F\cap M'''_{n-2} \neq 0 $, therefore $F$ is contained in the span of $M_{n-2}$ and $M_{n-2}'''$ which is $M_{n-1}$. Hence $F$ is contained in $M_{n-1}$, i.e $F \in E_1$. Hence the lemma.

\end{proof}

Since the map $H^{*}(Gr(k,n))$ to $H^{*}(G/B)$ is injective the above relations holds in $H^*(G/B)$ as well. We use the same notations $D_i$ and $D_{i,j}$ to define the Schubert classes in $H^*(G/B)$. Note that the above relations can also be deduced from Monk's formula. 

\begin{theorem}\label{consp3}
There is no nonconstant morphism from $\mathbb{P}^3$ to $G/{P_{\alpha_1}}$. 
\end{theorem}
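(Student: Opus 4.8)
The plan is to pull cohomology classes back along a morphism $\phi\colon\mathbb{P}^3\to G/P_{\alpha_1}$, play the power sum relations of $\mathcal{I}$ against the effectivity of pulled-back Schubert classes, and finish by a reduction to a fibre of a Grassmannian projection. So let such a $\phi$ be given; I will show it is constant.

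First I record the structure of the target. By the Reiner--Woo--Yong presentation, $H^\bullet(G/P_{\alpha_1})\cong(R/\mathcal{I})^{W_J}$ with $J=\{\alpha_1\}$, where $W_J=\{1,s_{\alpha_1}\}$ transposes $x_1$ and $x_2$; thus $x_1+x_2$ and $x_3,\dots,x_n$ lie in $H^2(G/P_{\alpha_1})$ and $x_1x_2\in H^4(G/P_{\alpha_1})$, while the symmetric polynomials $p_1=\sum_i x_i$ and $p_2=\sum_i x_i^2$ vanish there. Let $\pi\colon G/P_{\alpha_1}\to Gr(2,n)$ be the projection remembering only the $2$-dimensional subspace; its fibre over a point is a copy of the full flag variety $\mathrm{Fl}(\mathbb{C}^{\,n-2})$, and $D_2=\pi^*\mathcal{O}_{Gr(2,n)}(1)$. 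In the standard presentation one has $D_2=-(x_1+x_2)$ and $D_{2,1}=x_1x_2$ in $H^\bullet(Gr(2,n))\subseteq H^\bullet(G/P_{\alpha_1})$; combining Lemma~\ref{lm41} (case $k=2$) with Lemma~\ref{lm3} gives $E_2=D_{2,3}\cdot D_2-E_1=(D_2^2-2D_{2,1})\cdot D_2=-(x_1+x_2)(x_1^2+x_2^2)$. Now write $H^\bullet(\mathbb{P}^3)=\mathbb{Z}[t]/(t^4)$ with $\deg t=2$, and set $\phi^*(x_1+x_2)=s\,t$, $\phi^*(x_1x_2)=b\,t^2$ and $\phi^*(x_i)=a_i\,t$ for $i\ge3$, with $s,b,a_i\in\mathbb{Z}$.

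Next I extract the decisive relation. As $D_2=\pi^*\mathcal{O}_{Gr(2,n)}(1)$ is globally generated, $\phi^*D_2=-s\,t$ is nef on $\mathbb{P}^3$, hence $s\le0$. Applying $\phi^*$ to $p_1=0$ gives $s+\sum_{i\ge3}a_i=0$, and to $p_2=0$ (using $x_1^2+x_2^2=(x_1+x_2)^2-2x_1x_2$) gives $s^2-2b+\sum_{i\ge3}a_i^2=0$, so that $\phi^*(D_2^2-2D_{2,1})=(s^2-2b)\,t^2=-\bigl(\sum_{i\ge3}a_i^2\bigr)\,t^2$. Therefore
\[
\phi^*E_2=\phi^*(D_2^2-2D_{2,1})\cdot\phi^*D_2=-\Bigl(\sum_{i\ge3}a_i^2\Bigr)t^2\cdot(-s\,t)=s\Bigl(\sum_{i\ge3}a_i^2\Bigr)t^3 .
\]
On the other hand $E_2$ is the class of an effective Schubert variety in $Gr(2,n)$, so by Kleiman's transversality theorem (characteristic $0$) a general $G$-translate of $E_2$ pulls back along $\pi\circ\phi$ to an effective zero-cycle on $\mathbb{P}^3$; hence the integer $s\sum_{i\ge3}a_i^2$ is $\ge0$. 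Together with $s\le0$ and $\sum_{i\ge3}a_i^2\ge0$ this forces $s\sum_{i\ge3}a_i^2=0$, so either $s=0$ or $a_i=0$ for all $i\ge3$; in the second case $p_1=0$ again yields $s=0$. Hence $s=0$, i.e. $\phi^*D_2=0$.

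Finally, $\phi^*D_2=0$ means $(\pi\circ\phi)^*\mathcal{O}_{Gr(2,n)}(1)$ is trivial on $\mathbb{P}^3$, so $\pi\circ\phi$ is constant and $\phi$ factors through a single fibre of $\pi$, that is, through a copy of $\mathrm{Fl}(\mathbb{C}^{\,n-2})$. But any morphism $\mathbb{P}^3\to\mathrm{Fl}(\mathbb{C}^{\,n-2})$ is constant by the argument of Theorem~\ref{consp2}: writing $H^\bullet(\mathrm{Fl}(\mathbb{C}^{\,n-2}))=\mathbb{Z}[z_1,\dots,z_{n-2}]/\mathcal{I}'$, the relation $\sum_j z_j^2=0$ forces $\sum_j(\phi^*z_j/t)^2=0$ in $\mathbb{Z}$, hence every $\phi^*z_j=0$ and the map is constant. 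Thus $\phi$ is constant. The only non-formal ingredient is the positivity $\phi^*E_2\ge0$, which rests on the transversality of a general translate (Kleiman's theorem); once that is in hand, the arithmetic with the power sum relations and $s\le0$, and the reduction to the flag-variety fibre, are routine.
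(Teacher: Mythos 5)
Your proof is correct, but it takes a genuinely different route from the paper's. The paper never leaves the Schubert basis: it writes down the whole chain of relations $D_iD_j=D_{i,j}+D_{j,i}$ from Lemmas~\ref{lm41} and~\ref{lm42}, solves the resulting triangular system to express every $b_{i,j}$ in the $a_i$'s, and then combines Lemma~\ref{lm3} with positivity of $c_2$ and $a_2$ to produce a sum of squares that must vanish, concluding $\phi^{*i}=0$ for all $i>0$ directly. You instead import the Borel presentation: the power sum relations $p_1=p_2=0$ let you compute $\phi^*(D_{2,3}-D_{2,1})=\phi^*(x_1^2+x_2^2)=-\sum_{i\ge3}a_i^2\,t^2\le0$ in one line, so that the single effectivity statement $\phi^*E_2=\phi^*(D_2^2-2D_{2,1})\cdot\phi^*D_2\ge0$ already forces $\phi^*D_2=0$; you then factor $\phi$ through a fibre $\mathrm{Fl}(\mathbb{C}^{n-2})$ of $G/P_{\alpha_1}\to Gr(2,n)$ and finish by the power-sum argument of Theorem~\ref{consp2} (which indeed works verbatim for $\mathbb{P}^3$ since $H^4(\mathbb{P}^3)\neq0$). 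This buys a shorter computation (Lemma~\ref{lm42} and the full system of $b_{i,j}$'s are not needed), an explicit justification of positivity via Kleiman transversality where the paper only asserts it, and a clean structural reduction to the fibre; the paper's version buys self-containedness within Schubert calculus and the slightly stronger intermediate output that $\phi^*$ kills all of $H^{>0}(G/P_{\alpha_1})$. Two small points you should make explicit: the sign in $D_2=\pm(x_1+x_2)$ is immaterial to your inequality (as you implicitly use, since only $D_2^2$ and the nonnegativity of $\phi^*D_2$ enter), but the identification $D_{2,1}=x_1x_2$ rather than $D_{2,3}=x_1x_2$ is genuinely load-bearing --- swapping them flips the sign of $D_2^2-2D_{2,1}$ and destroys the contradiction --- so you should record why $D_{2,1}=\{F\subset M_{n-1}\}$ is the class $\sigma_{(1,1)}=e_2$ of the Chern roots (Giambelli), which is true but not stated anywhere in the paper.
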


\begin{proof}
Let $P = P_{\alpha_1}$. So $G/P$ is $G(2,3,\ldots,n)$. Let
\[
\phi : \mathbb{P}^3 \longrightarrow G/P 
\] be a  map. Let  
\[ \phi^{*}: H^{\bullet}(G/P) \longrightarrow H^{\bullet}(\mathbb{P}^{3})
 \] 
 be the map at the level of cohomology. Let
 \[ \phi^{*i}: H^{i}(G/P) \longrightarrow H^{i}(\mathbb{P}^{3})
 \] be the map at degree $i$. 
 We know that $H^{*}(\mathbb{P}^{3}) \cong \mathbb{Z}[t]/(t^4)$.
 We will show that $\phi^{*i} = 0$ for all $i > 0$.

We know that the divisors in $G(2,3,\ldots,n)$ are $D_2, D_3, \ldots, D_{n-1}$. From \ref{lm41} and \ref{lm42} we have the following relations in $H^{*}(G/P)$
\begin{align*}
D_2D_2 &= D_{2,1} + D_{2,3} \\
D_2D_3 &= D_{2,3} + D_{3,2} \\
D_3D_3 &= D_{3,2} + D_{3,4} \\
&\;\;\vdots \notag \\
D_{n-2}D_{n-1} &= D_{n-2,n-1} + D_{n-1,n-2}\\
D_{n-1}D_{n-1} &= D_{n-1,n-2} \\
\end{align*}

Letting $\phi^{*}(D_i) = a_{i}t$ in $H^2(\mathbb{P}^3)$ and  $\phi^{*}(D_{i,j}) = b_{i,j}t^2$ in $H^4(\mathbb{P}^3)$ we obtain the following relations in $H^{*}(\mathbb{P}^3)$. 
\begin{flalign*}
a_2^2 &= b_{2,1} + b_{2,3}\\
a_2a_3 &= b_{2,3} + b_{3,2} \\
 &\;\;\vdots \notag \\
a_{n-1}^2 &= b_{n-1,n-2}
\end{flalign*}

So rewriting $b_{i,j}$ in terms of $a_{i,j}$ we obtain 
\begin{align}
b_{n-1,n-2} &= a_{n-1}^2 \nonumber \\
b_{n-2,n-1} &= a_{n-1}a_{n-2} - a_{n-1}^2 \nonumber \\
&\;\;\vdots \notag \\
b_{i,i-1} &= (a_{i}^2 + a_{i+1}^2+ \ldots a_{n-1}^2) - (a_{i}a_{i+1} + a_{i+1}a_{i+2} \ldots + a_{n-2}a_{n-1}) \nonumber  \\
b_{i-1,i} &= (a_{i-1}a_{i} + a_{i}a_{i+1} + \ldots + a_{n-2}a_{n-1}) -(a_{i}^2 + a_{i+1}^2+ \ldots a_{n-1}^2) \nonumber \\
 &\;\;\vdots \notag \\
b_{2,3} &= (a_2a_3 + a_3a_4 + \ldots + a_{n-2}a_{n-1}) - (a_{3}^2 + \ldots + a_{n-1}^2) \nonumber  \\ 
b_{2,1} &= (a_{2}^2 + \ldots + a_{n-1}^2) - (a_2a_3 + a_3a_4 + \ldots + a_{n-2}a_{n-1}). \nonumber \\
\nonumber 
\end{align}

Let $\phi^{*}(E_1) = c_1t^3$ and $\phi^{*}(E_2) = c_2t^3$. Therefore from \ref{lm3} we get 
\[ b_{2,1} a_2 = c_1 \] 
\[ b_{2,3 }a_2 = c_1 + c_2 .\]

We know Schubert classes are represented by algebraic cycles and hence their pullbacks are algebraic cycles in the projective space. Therefore, Schubert polynomials are mapped to non negative classes in the cohomology of projective spaces. So $c_2 \geq 0$. Therefore, we obtain $b_{2,3 }a_2 \geq b_{2,1}a_2$. We have $a_2 \geq 0$. If $a_2 >0 $ we observe $b_{2,3 } \geq b_{2,1}$. Hence, 
\[ (a_2a_3 + a_3a_4 + \ldots + a_{n-2}a_{n-1}) - (a_{3}^2 + \ldots + a_{n-1}^2) \geq (a_{2}^2 + \ldots + a_{n-1}^2) - (a_2a_3 + a_3a_4 + \ldots + a_{n-2}a_{n-1})  
\] which implies that 
\[ (a_2- a_3)^2 + (a_3-a_4)^2 + \ldots (a_{n-2} - a_{n-1})^2 + a_{2}^2 \leq 0
\] which forces $a_i = 0$ for all $i$.

If $a_2 = 0$ we have $b_{2,3} = b_{2,1} = 0$. Then we obtain 
\[ a_{3}^2 + a_{4}^2 + \ldots + a_{n-1}^2  = a_{3}a_{4} + \ldots a_{n-2}a_{n-1}. 
\]
which implies 
\[ (a_3-a_4)^2 + (a_4 - a_5)^2 + \ldots + (a_{n-2} - a_{n-1})^2 + a_{3}^2 + a_{n-1}^2 =  0
\] which forces all $a_i = 0$.
Therefore, we conclude that $\phi^{*i} =0$ for all $i > 0$.
\end{proof}

\begin{corollary} There is no nonconstant morphism from $\mathbb{P}^3$ to $G/{P_{\alpha_{n-1}}}$. 
\end{corollary}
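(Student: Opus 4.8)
The plan is to reduce this statement immediately to Theorem \ref{consp3} by exhibiting an isomorphism of varieties $G/P_{\alpha_{n-1}} \cong G/P_{\alpha_1}$. Recall, in the notation $G(i_1,\dots,i_k)$ of the preceding section, that $G/P_{\alpha_1}$ is the partial flag variety $G(2,3,\dots,n)$ of flags $L_2 \subset L_3 \subset \cdots \subset L_{n-1} \subset V$ with $\dim L_j = j$ (all dimensions except $1$), while $G/P_{\alpha_{n-1}}$ is $G(1,2,\dots,n-2,n)$, the flags $L_1 \subset L_2 \subset \cdots \subset L_{n-2} \subset V$ (all dimensions except $n-1$).

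First I would construct the isomorphism explicitly via linear duality. Working with the dual space $V^{*}$, send a flag $(L_1 \subset \cdots \subset L_{n-2})$ to the flag of annihilators $(L_{n-2}^{\perp} \subset L_{n-3}^{\perp} \subset \cdots \subset L_1^{\perp})$ in $V^{*}$. Since $\dim L_j^{\perp} = n - j$, as $j$ runs over $\{1,\dots,n-2\}$ the dimensions $n-j$ run over $\{2,3,\dots,n-1\}$; so this assignment gives a morphism $G/P_{\alpha_{n-1}} \to G/P_{\alpha_1}$ whose inverse is the same construction applied to $V^{*}$, hence an isomorphism of algebraic varieties. Equivalently, one may invoke the automorphism $\theta \colon SL(n,\mathbb{C}) \to SL(n,\mathbb{C})$, $g \mapsto w_0\,(g^{\mathsf{T}})^{-1}\,w_0^{-1}$, which preserves $B$ and $T$ and acts on simple roots by $\alpha_i \mapsto \alpha_{n-i}$, so that it carries $P_{\alpha_{n-1}}$ to $P_{\alpha_1}$ and descends to the desired isomorphism of quotients.

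Now suppose $\phi \colon \mathbb{P}^3 \to G/P_{\alpha_{n-1}}$ is a morphism. Composing with the isomorphism above yields a morphism $\psi \colon \mathbb{P}^3 \to G/P_{\alpha_1}$, and $\phi$ is constant if and only if $\psi$ is constant. By Theorem \ref{consp3}, $\psi$ is constant, hence so is $\phi$. The only point requiring a line of care is verifying that the duality assignment is a genuine morphism in both directions — it is, being induced by the canonical linear isomorphism $V \cong V^{**}$ on the relevant flag functors — so there is no real obstacle, and no input beyond Theorem \ref{consp3} is needed.
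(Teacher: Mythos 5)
Your proposal is correct and takes essentially the same route as the paper: both reduce to Theorem \ref{consp3} via the isomorphism $G/P_{\alpha_{n-1}} \cong G/P_{\alpha_1}$ induced by the Dynkin involution $\alpha_i \mapsto \alpha_{n-i}$ (which you realize concretely as linear duality $g \mapsto w_0 (g^{\mathsf T})^{-1} w_0^{-1}$). Your version just spells out the duality of flags more explicitly than the paper does.
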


\begin{proof} 

Since $G= SL(n,\mathbb{C})$, we have an automorphism of $G$ which is induced by the Dynkin involution taking $\alpha_i$ to $\alpha_{n-i}$ for all $ 1\leq i \leq n-1$. Under this automorphism we have $P_{\alpha_1}$ isomorphic to $P_{\alpha_{n-1}}$. We have $G/P_{\alpha_1}$ isomorphic to $G/P_{\alpha_{n-1}}$.

\end{proof}

\begin{lemma} \label{p3tog134}
 There is a non constant morphism from $\mathbb{P}^3$ to $G(1,3,4)$. 
\end{lemma}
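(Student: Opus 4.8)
The plan is to realise $G(1,3,4)$ as the classical flag-incidence variety in $\mathbb{C}^4$ and to produce the morphism from a non-degenerate alternating form, exploiting the elementary fact that every line is isotropic for such a form.

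First I would set up the identification. Since the sequence $1<3<4$ ends in $4=n$, the variety $G(1,3,4)$ parametrises pairs $L_1\subset L_3$ of subspaces of $V=\mathbb{C}^4$ with $\dim L_1=1$ and $\dim L_3=3$. Recording a hyperplane $L_3$ by the line in $V^{*}$ of linear forms vanishing on it presents $G(1,3,4)$ as the subvariety
\[
\{\,([x],[\lambda])\in\mathbb{P}^3\times(\mathbb{P}^3)^{*}\ :\ \lambda(x)=0\,\}
\]
of $\mathbb{P}^3\times(\mathbb{P}^3)^{*}$, the incidence condition $\lambda(x)=0$ being exactly $L_1\subset L_3$.

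Next I would fix a non-degenerate alternating bilinear form $\omega$ on $V$, say $\omega(v,w)=v^{T}Jw$ where $J$ is the block-diagonal $4\times 4$ matrix with two diagonal blocks $\left(\begin{smallmatrix}0&-1\\ 1&0\end{smallmatrix}\right)$ (any invertible antisymmetric $J$ will do), and define
\[
\Phi:\mathbb{P}^3\longrightarrow\mathbb{P}^3\times(\mathbb{P}^3)^{*},\qquad \Phi([x])=\bigl([x],\,[x^{T}J]\bigr),
\]
that is, $\Phi([x_0:x_1:x_2:x_3])=\bigl([x_0:x_1:x_2:x_3],\,[x_1:-x_0:x_3:-x_2]\bigr)$. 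I would then check three things: (i) $\Phi$ is a morphism, since $x\mapsto x^{T}J$ is linear with trivial kernel ($J$ is invertible), so it has no base points on $\mathbb{P}^3$; (ii) $\Phi$ lands in the incidence variety, since the pairing of the two coordinates of $\Phi([x])$ equals $x^{T}Jx$, which is identically zero because $J$ is antisymmetric --- intrinsically, $\Phi$ sends a line $\ell$ to $(\ell,\ell^{\perp_{\omega}})$, with $\dim\ell^{\perp_{\omega}}=3$ by non-degeneracy and $\ell\subseteq\ell^{\perp_{\omega}}$ because $\ell$ is $\omega$-isotropic; (iii) $\Phi$ is non-constant, indeed injective, because its first component is the identity of $\mathbb{P}^3$. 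Together these give a non-constant morphism $\mathbb{P}^3\to G(1,3,4)$.

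I do not expect any genuine obstacle: the whole point is the choice of construction, which reflects that $\mathbb{P}^3$ is a homogeneous space for $Sp(4,\mathbb{C})$, and the verifications (i)--(iii) are one line each. For the record, the same construction feeds Theorem~\ref{thm1}: when $1<j<n-1$ one places this copy of $\mathbb{C}^4$ as $\langle e_{j-1},e_j,e_{j+1},e_{j+2}\rangle\subset\mathbb{C}^n$ and completes the flag $L_1\subset L_3$ with the standard coordinate subspaces $M_1,\dots,M_{j-2},M_{j+2},\dots,M_{n-1}$, obtaining an embedding $G(1,3,4)\hookrightarrow G/P_{\alpha_j}$ whose composite with $\Phi$ is a non-constant map $\mathbb{P}^3\to G/P_{\alpha_j}$.
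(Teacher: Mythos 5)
Your proposal is correct and is essentially identical to the paper's own proof: both fix a non-degenerate alternating form on $V=\mathbb{C}^4$ and send a line $L$ to the flag $(L, L^{\perp}, V)$, using that every line is isotropic for such a form. Your version merely adds the explicit coordinate/incidence-variety verification, which the paper leaves implicit.
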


\begin{proof}
Let $V$ be a vector space of dimension $4$. Let $\mathbb{P}^3$ be the projective space of lines in $V$. Fix a non-degenerate alternating bilinear form. Because the form is non-degenerate and alternating it follows that for every line $L$ the orthogonal compliment $L^{\perp}$ of $L$ is a $3$ dimensional subspace of $V$ containing $L$. Hence, $(L,L^{\perp}, V)$ is an element of $G(1,3,4)$ and the map $L \mapsto (L,L^{\perp}, V)$ defines the required morphism.
\end{proof}

\begin{theorem} There is a non constant morphism from $\mathbb{P}^3$ to $G/P_{\alpha}$ for all minimal parabolic subgroup  $P_{\alpha}$ with  ${\alpha} \notin \{ {\alpha_{1}, \alpha_{n-1}} \}$.
\end{theorem}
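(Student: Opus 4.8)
The plan is to reduce every case to the single morphism $\mathbb{P}^3 \to G(1,3,4)$ already produced in Lemma~\ref{p3tog134} (the one coming from a non-degenerate alternating form on a four-dimensional space), and then to promote that morphism to a morphism into the larger flag variety by padding flags with fixed subspaces.

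First I would fix $\alpha = \alpha_j$ with $1 < j < n-1$ and recall that $G/P_{\alpha_j}$ is the partial flag variety $G(1,2,\ldots,j-1,j+1,\ldots,n)$, whose points are the chains of subspaces of $V = \C^n$ having every dimension in $\{1,\ldots,n-1\}$ except $j$. Since $j \geq 2$ and $j \leq n-2$, I can write $V = A \oplus V_0 \oplus C$ with $\dim A = j-2$, $\dim V_0 = 4$ and $\dim C = n-j-2$, where a summand is understood to be $0$ when its prescribed dimension is $0$. Fix once and for all a complete flag $A_1 \subset \cdots \subset A_{j-2} = A$ in $A$ and a complete flag $C_1 \subset \cdots \subset C_{n-j-2} = C$ in $C$.

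Next I would build a morphism $\Phi : G(1,3,4) \to G/P_{\alpha_j}$, where now $G(1,3,4)$ denotes the flag variety of $V_0$, by sending a point $(W_1 \subset W_3 \subset V_0)$ to the flag whose successive terms are $A_1, \ldots, A_{j-2}$, then $A\oplus W_1$, then $A\oplus W_3$, then $A\oplus V_0$, and finally $A\oplus V_0\oplus C_1, \ldots, A\oplus V_0\oplus C$. These subspaces are nested because $W_1 \subset W_3 \subset V_0$, their dimensions are precisely $\{1,\ldots,n\}\setminus\{j\}$ by a direct count, and $\Phi$ is a morphism of varieties since it is built only from direct sums with the fixed subspaces $A_i$, $A$, $V_0$, $C_i$. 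Composing $\Phi$ with the morphism $\mathbb{P}^3 \to G(1,3,4)$, $L \mapsto (L \subset L^\perp \subset V_0)$, of Lemma~\ref{p3tog134} then gives a morphism $\psi : \mathbb{P}^3 \to G/P_{\alpha_j}$, which is non-constant because the $(j-1)$-dimensional term of $\psi(L)$ is $A\oplus L$ and $A \cap V_0 = 0$ lets us recover $L = (A\oplus L)\cap V_0$; thus $\psi$ is injective already on that one component of the flag.

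I do not expect a genuine obstacle: all the geometry sits in the base case $G(1,3,4)$, and the rest is the bookkeeping of extending a flag by fixed pieces on the left and on the right. The only points that deserve an explicit word are the two extreme values $j = 2$ (where $A = 0$ and the padding is only on the right) and $j = n-2$ (where $C = 0$ and the padding is only on the left); both are covered uniformly by allowing zero-dimensional summands in the decomposition $V = A\oplus V_0 \oplus C$.
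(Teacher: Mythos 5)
Your proposal is correct and is essentially the paper's own argument: both reduce to the morphism $\mathbb{P}^3 \to G(1,3,4)$ of Lemma~\ref{p3tog134} and then embed that $G(1,3,4)$ as a fiber of the forgetful map $G/P_{\alpha_j} \to G(1,\ldots,j-2,j+2,\ldots,n)$ over a fixed partial flag, the only cosmetic difference being that you realize the $4$-dimensional space via a direct-sum splitting $V = A\oplus V_0\oplus C$ while the paper uses the quotient $L_{j+2}/L_{j-2}$. Your explicit check of non-constancy and of the boundary cases $j=2$, $j=n-2$ is a welcome addition but does not change the route.
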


\begin{proof} Let $\alpha = \alpha_j$ where $2 \leq j \leq n-2$. Fix a flag
 \[ L_1 \subset L_2 \cdots \subset L_{j-2} \subset L_{j+2} \subset L_{j+3} \cdots  \subset L_{n-1} \subset L_n 
 \] 
 where dimension of $L_j = j$.
Then the fiber over this flag of the map 
\[ G(1,2,\ldots,j-1,j+1,j+2,\ldots,n-1,n) \longrightarrow G(1,2,\ldots,j-2, j+2,\ldots, n-1, n )
\]
is isomorphic to $G(1,3,4)$ which is identified as the flags $(L_{j-1}/L_{j-2},L_{j+1}/L_{j-2},L_{j+2}/L_{j-2})$. So we have a map from $G(1,3,4)$ to $G/P_{\alpha}$. And using lemma \ref{p3tog134} we prove the theorem.

\end{proof}

\section{Maps from $\mathbb{P}^4$ to $G/P$ for a minimal parabolic subgroup}\label{sec:P4}

The first arXiv version of this paper contained an incorrect formula in the argument for $\Pj^4$.
We are grateful to Yanjie Li for pointing this out.
In this section we give a corrected proof of the nonexistence of morphisms from $\Pj^4$
to minimal parabolic quotients.
The argument below is completely elementary and relies only on the Borel presentation
of cohomology and basic inequalities among symmetric polynomials.

\begin{theorem}
There is no non-constant morphism from $\mathbb{P}^4$ to $G/P$, where $P$ is a minimal parabolic subgroup of $G$.
\end{theorem}

\begin{proof}
We have already seen that there is no map form $\mathbb{P}^3$ to $G/P_{\alpha_i}$ for $i = 1, n-1$. Without loss of generality, we take $P = P_{\alpha_2}$.  
We show that there is no non-zero homomorphism 
\[
H^*(G/P) \longrightarrow H^4(\mathbb{P}^4).
\]

Recall that the Borel presentation of the cohomology ring can be written as
\[
H^*(G/P) \;=\; \mathbb{Z}[x_1, x_2, y, \ldots, x_n] / \mathcal{I},
\]
where $\mathcal{I}$ is generated by the coefficients of the polynomial $P(t) - 1$, with
\[
P(t) = (1 + x_1 t)\,(1 + x_2 t + y t^2)\,(1 + x_3 t) \cdots (1 + x_n t).
\]

Let 
\[
\phi : \mathbb{P}^4 \longrightarrow G/P
\]
be a morphism, and let
\[
\phi^* : H^\bullet(G/P) \longrightarrow H^\bullet(\mathbb{P}^4)
\]
be the induced map on cohomology.  
Under $\phi^*$, we set
\[
x_i \longmapsto a_i, \qquad y \longmapsto b.
\]

Denote by $e_i$ the elementary symmetric polynomials in the $a_j$:
\[
e_1 = \sum_i a_i, \quad 
e_2 = \sum_{i<j} a_i a_j, \quad 
e_3 = \sum_{i<j<k} a_i a_j a_k, \quad 
e_4 = \sum_{i<j<k<\ell} a_i a_j a_k a_\ell.
\]
Let $\hat e_i$ be the corresponding symmetric polynomial omitting $a_2$.  
The relations obtained from the defining equation $P(t) \equiv 1 \pmod{t^5}$ are then
\[
e_1 = 0, \qquad 
e_2 + b = 0, \qquad 
e_3 + b \hat e_1 = 0, \qquad 
e_4 + b \hat e_2 = 0.
\]

Hence
\[
\hat e_1 = -a_2, \qquad 
e_2 = -b, \qquad 
e_3 = a_2 b, \qquad 
e_4 = b^2 - b a_2^2.
\]

Let $p_k = \sum_i a_i^k$ denote the power-sum symmetric polynomials.  
By Newton’s identities, we have:
\[
\begin{aligned}
p_1 &= e_1 = 0,\\
p_2 &= e_1 p_1 - 2 e_2 = 2b,\\
p_3 &= e_1 p_2 - e_2 p_1 + 3 e_3 = 3 a_2 b,\\
p_4 &= e_1^4 - 4 e_1^2 + 4 e_1 e_3 + 2 e_2^2 - 4 e_4 
     = 4 b a_2^2 - 2 b^2.
\end{aligned}
\]

By the Cauchy–Schwarz inequality,
\[
p_3^2 \leq p_2 p_4.
\]
Substituting the above expressions gives
\[
(3 a_2 b)^2 \leq (2b)(4 b a_2^2 - 2 b^2),
\]
which simplifies to
\[
9 a_2^2 b^2 \leq 8 a_2^2 b^2 - 4 b^3
\quad \Longrightarrow \quad
b^2(a_2^2 + 4b) \leq 0.
\]
Thus $a_2^2 + 4b \leq 0$, implying $b \leq 0$.  

On the other hand, since $p_2 = 2b = \sum_i a_i^2 \ge 0$, we must have $b \ge 0$.  
Hence $b = 0$, which forces $p_2 = 0$, and consequently $a_i = 0$ for all $i$.  

Therefore, $\phi^*$ is the zero map, and $\phi$ is constant.
\end{proof}
\eat{
We already observed that $H/B_H$ sits inside $G/B$ for any reductive algebraic group $H$ with Borel subgroup $B_H$. Noting from the fact that $H/B_H$ is a $\mathbb{P}^1$ bundle over $H/P_H$ for any minimal parabolic subgroup $P_H$ containing $B_H$, we conclude $H/P_H$ should sit inside $G/P$ for a minimal parabolic subgroup $P$ of $G$. So we can conclude 
\begin{corollary} Let $H$ be a reductive group with $P_H$ minimal parabolic subgroup. Then, there is no morphism from $\mathbb{P}^4$ to $H/P_H$.
\end{corollary}
}
\section{Declaration}
\textbf{Conflicts of interest} 
The authors declare that they have no conflicts of interest.

\bibliographystyle{amsplain}
\bibliography{references}

\providecommand{\bysame}{\leavevmode\hbox to3em{\hrulefill}\thinspace}
\providecommand{\MR}{\relax\ifhmode\unskip\space\fi MR }
\providecommand{\MRhref}[2]{%
  \href{http://www.ams.org/mathscinet-getitem?mr=#1}{#2}
}
\providecommand{\href}[2]{#2}
\begin{thebibliography}{10}

\bibitem{Billeylakshmibai}
Sara Billey and V.~Lakshmibai, \emph{Singular loci of {S}chubert varieties},
  Progress in Mathematics, vol. 182, Birkh\"{a}user Boston, Inc., Boston, MA,
  2000. \MR{1782635}

\bibitem{Borelcohoring}
Armand Borel, \emph{Sur la cohomologie des espaces fibr\'{e}s principaux et des
  espaces homog\`enes de groupes de {L}ie compacts}, Ann. of Math. (2)
  \textbf{57} (1953), 115--207. \MR{51508}

\bibitem{Ehresmann}
Charles Ehresmann, \emph{Sur la topologie de certains espaces homog\`enes},
  Ann. of Math. (2) \textbf{35} (1934), no.~2, 396--443. \MR{1503170}

\bibitem{FangRen2025}
Xinyi Fang and Peng Ren, \emph{Morphisms from $\mathbb{P}^m$ to flag
  varieties}, Proc. Amer. Math. Soc. \textbf{153} (2025), no.~12, 3263--3274.

\bibitem{fultonintersection}
William Fulton, \emph{Intersection theory}, second ed., Ergebnisse der
  Mathematik und ihrer Grenzgebiete. 3. Folge. A Series of Modern Surveys in
  Mathematics [Results in Mathematics and Related Areas. 3rd Series. A Series
  of Modern Surveys in Mathematics], vol.~2, Springer-Verlag, Berlin, 1998.
  \MR{1644323}

\bibitem{HuLiLiu23}
Haoqiang Hu, Changzheng Li, and Zhaoyang Liu, \emph{Effective good divisibility
  of rational homogeneous varieties}, Mathematische Zeitschrift \textbf{305}
  (2023), no.~3, 52.

\bibitem{kumarsconj}
Shrawan Kumar, \emph{Nonexistence of regular maps between homogeneous
  projective varieties}, International Mathematics Research Notices
  \textbf{2025} (2025), no.~13.

\bibitem{Lazarsfeld}
Robert Lazarsfeld, \emph{Some applications of the theory of positive vector
  bundles}, Complete intersections ({A}cireale, 1983), Lecture Notes in Math.,
  vol. 1092, Springer, Berlin, 1984, pp.~29--61. \MR{775876}

\bibitem{Munoz-etal}
Roberto Mu\~noz, Gianluca Occhetta, and Luis~E. Sol\'a{}~Conde, \emph{Maximal
  disjoint {S}chubert cycles in rational homogeneous varieties}, Math. Nachr.
  \textbf{297} (2024), no.~1, 174--194. \MR{4694589}

\bibitem{Naldi--Occhetta}
Angelo Naldi and Gianluca Occhetta, \emph{Morphisms between {G}rassmannians},
  Proc. Japan Acad. Ser. A Math. Sci. \textbf{98} (2022), no.~10, 101--105.
  \MR{4743876}

\bibitem{OcchettaTondelli24}
Gianluca Occhetta and Eugenia Tondelli, \emph{Morphisms between
  {G}rassmannians, {II}}, Arch. Math. (Basel) \textbf{122} (2024), no.~5,
  521--529. \MR{4734565}

\bibitem{Pan}
Xuanyu Pan, \emph{Triviality and split of vector bundles on rationally
  connected varieties}, Math. Res. Lett. \textbf{22} (2015), no.~2, 529--547.
  \MR{3342245}

\bibitem{srinivas}
K.~H. Paranjape and V.~Srinivas, \emph{Self-maps of homogeneous spaces},
  Invent. Math. \textbf{98} (1989), no.~2, 425--444. \MR{1016272}

\bibitem{Reineretal}
Victor Reiner, Alexander Woo, and Alexander Yong, \emph{Presenting the
  cohomology of a {S}chubert variety}, Trans. Amer. Math. Soc. \textbf{363}
  (2011), no.~1, 521--543. \MR{2719692}

\bibitem{Tango1}
Hiroshi Tango, \emph{On {$(n-1)$}-dimensional projective spaces contained in
  the {G}rassmann variety {${\rm Gr}(n,\,1)$}}, J. Math. Kyoto Univ.
  \textbf{14} (1974), 415--460. \MR{371915}

\bibitem{Tangobundle}
\bysame, \emph{An example of indecomposable vector bundle of rank {$n-1$} on
  {$P^{n}$}}, J. Math. Kyoto Univ. \textbf{16} (1976), no.~1, 137--141.
  \MR{401766}

\bibitem{Tango2}
\bysame, \emph{On morphisms from projective space {$P^{n}$} to the {G}rassmann
  variety {$G{\rm r}(n, d)$}}, J. Math. Kyoto Univ. \textbf{16} (1976), no.~1,
  201--207. \MR{401787}

\bibitem{Tango3}
\bysame, \emph{On morphisms from projective space {${\bf P}^{n}$} to the
  {G}rassmann variety {${\rm Gr}(n,\,d)$}. {II}}, Bull. Kyoto Univ. Ed. Ser. B
  (1984), no.~64, 1--20. \MR{759949}

\end{thebibliography}

\end{document}